\documentclass[11pt]{article}

\usepackage{fullpage}
\usepackage{amsmath, amsthm, amssymb}
\usepackage{bbm}
\usepackage{authblk}
\usepackage{mathtools}
\usepackage{graphicx}
\usepackage{enumitem}
\usepackage{xcolor}
\usepackage{booktabs}
\usepackage{natbib}
\usepackage{subfig}
\usepackage{multirow}
\usepackage{float}
\usepackage{tikz}
\usepackage{longtable}
\usepackage{indentfirst}
\usepackage{setspace}
\usepackage{xurl}
\RequirePackage[colorlinks = true, linkcolor = blue, citecolor=blue,urlcolor=blue]{hyperref}

\usepackage{amsthm}

\newtheorem{theorem}{Theorem}
\newtheorem{prop}{Proposition}
\newtheorem{lemma}{Lemma}

\theoremstyle{definition}
\newtheorem{definition}{Definition}

\newtheorem{kernelexampleinner}{Example}

\newenvironment{kernelexample}
  {\begin{kernelexampleinner}}
  {\end{kernelexampleinner}}
\newtheorem{lossexampleinner}{Example}

\newenvironment{lossexample}
  {\begin{lossexampleinner}}
  {\end{lossexampleinner}}

\theoremstyle{remark}

\DeclareMathOperator*{\argmin}{argmin}
\DeclareMathOperator*{\diag}{diag}
\DeclareMathOperator{\MSE}{MSE}

\DeclareMathOperator{\Ran}{Ran}

\DeclareMathOperator{\RCV}{RCV}

\DeclareMathOperator{\Span}{Span}
\DeclareMathOperator{\sign}{sign}

\definecolor{ccol}{rgb}{.9,.35,0}

\definecolor{c1}{rgb}{0,  0, 0}
\definecolor{c2}{rgb}{0,  0, 0}
\definecolor{c3}{rgb}{128,  0, 128}
\definecolor{ccol}{rgb}{.9,.35,0}

\begin{document}

\title{Generalized nonparametric regression in reproducing kernel Hilbert spaces: Consistency and rates of convergence}
\author{Ioannis Kalogridis}
\affil{School of Mathematics and Statistics, University of Glasgow, United Kingdom}
\date{\today}
\maketitle
	
\begin{abstract}

We develop a comprehensive theory for regularized M-estimation in reproducing kernel Hilbert spaces. Under mild conditions on the loss we establish existence and measurability of the estimator, covering a wide range of convex and non-convex losses, including bounded robust losses. We further prove sharp rates of convergence with an explicit bias-variance decomposition governed by a novel complexity measure. We show that the variance is independent of misspecification, while the bias depends on a source condition parameter known in the learning literature. For tensor product Sobolev spaces we obtain new rates that connect to spaces of functions with dominating mixed smoothness, substantially extending existing results and explaining why these estimators circumvent the curse of dimensionality. Our methodology, combining elements from  both functional analysis and empirical process theory, allows for an asymptotic linearisation of the objective function that avoids both closed-form solutions and global Lipschitz assumptions, and may be of independent interest. The estimators are implemented in C++ and theory is supported by numerical experiments.

\end{abstract}

{Keywords:}  reproducing kernel Hilbert spaces, M-estimation, robustness, regularization, dominating mixed smoothness.

{MSC 2020}:  62G08, 62G35, 62G20.

\section{Introduction}
Given independently sampled observations $\{(x_i,y_i)\}_{i=1}^n\subset X \times \mathbbm{R}$ where $X \subset \mathbbm{R}^d$, consider the  multivariate nonparametric regression model
\begin{align}
\label{eq:npm}
y_i = f_0(x_i) + \epsilon_i, \quad (i=1, \ldots, n),
\end{align}
where $f_0: \mathbbm{R}^d \to \mathbbm{R}$ is the unknown regression function and $\{\epsilon_i\}_{i=1}^n$ are independent and identically distributed additive error terms. When little is known a priori about $f_0$, a natural strategy is to estimate it by minimizing a regularized empirical risk over a reproducing kernel Hilbert space (RKHS) $\mathcal{H}$, that is,
\begin{align}
\label{eq:est}
\widehat{f}_n = \argmin_{f \in \mathcal{H}} \left[ \frac{1}{n} \sum_{i=1}^n \rho(y_i - f(x_i)) + \lambda \|f\|_{\mathcal{H}}^2 \right],
\end{align}
where $\rho: \mathbbm{R} \to \mathbbm{R}$ is a convex loss function and $\lambda > 0$ is a regularization parameter balancing fidelity to the data and smoothness. This formulation provides a broad estimation framework incorporating seamlessly multiple predictor variables. Moreover, the user has the freedom to specify the loss function depending on the characteristics of the data and goals of the analysis. Examples include not only robust loss functions, e.g., $\rho_k(x) = (x^2/2)1(|x| \leq k) + k(|x|-k/2)1(|x|>k)$ for $k>0$, aimed at reducing the influence for outlying observations on the estimates, but also asymmetric loss functions, such as the quantile loss $\rho_{\tau}(x) = x(\tau-1(x<0))$, aimed at extracting additional information from the sample. The estimator $\widehat{f}_n$ is called the regularized M-estimator and is the object of our study. We refer to the monographs \citep{Huber:2009,Mar:2019} for introductions and illustrative examples of M-estimation.

While the special case of \eqref{eq:est} with $\rho(x) = x^2/2$, the least-squares estimator, is by now a staple in the modern machine learning literature having been studied in numerous works, for example, in \citep{Cucker:2005, C:2007, St:2012, Fischer:2020, Zhang:2023}, the treatment of the general estimator \eqref{eq:est} has received significantly less attention. This disparity is surprising, as numerous practically important procedures, including quantile, expectile and robust regression, arise as special cases of \eqref{eq:est}. Moreover, these methods are often preferable to least-squares estimation either due to their robustness to outlying observations or their ability to estimate features of the conditional distribution beyond the mean, such as quantiles or expectiles.

The relative scarcity of theoretical results is largely explained by the technical difficulties posed by general loss functions. Unlike the least-squares estimator, regularized M-estimators do not admit explicit representations in terms of integral operators and kernel eigenfunctions, compare, e.g., with equation (21) in \citep{Fischer:2020}. As a consequence, many of the spectral techniques underlying modern RKHS theory are no longer directly applicable and one needs to argue in terms of the objective function in order to establish theoretical properties. To the best of our knowledge, no unifying framework has emerged and the literature remains fragmented with only a few particular combinations of losses and kernels having been investigated. Quantile regression in RKHS was studied by \citet{Li:2007}, who obtained rates of convergence under restrictive assumptions including boundedness of both the response variables and the RKHS. Subsequently, \citet{Eberts:2013} investigated least-squares and quantile regression in Gaussian RKHSs under Besov smoothness assumptions on the target function, while \citet{Far:2019} considered expectile regression, still in Gaussian RKHSs, and sharpened the rate obtained  by \citet{Eberts:2013}.

Rather than focusing on specific loss functions and RKHSs, this manuscript fills the gap by developing a theoretical framework for the analysis of \eqref{eq:est} in full generality. Our framework covers all commonly used convex absolutely continuous losses $\rho$ (fully worked out examples are given in Section~\ref{sec:loss}) and all RKHSs, even RKHSs on non-Euclidean domains such as spheres (see Section~\ref{sec:kernel} for specific examples). The main mathematical insight of this paper is that the variance of the regularized M-estimators is governed by a novel complexity measure $\mathcal{N}_{\infty}(\lambda)$, depending on $\lambda$, that we call \textit{spectral complexity}. The principle applies very broadly yielding explicit rates of convergence. We apply it, in particular, on the intriguing tensor product Sobolev space $\mathcal{H}^m([0,1])^{\otimes d}$ extending existing results and establishing the optimality of a large family of estimators. We summarise the main contributions of this work below.

\begin{enumerate}
\item We establish the theoretical existence and measurability of the regularized M-estimator $\widehat{f}_n$ under exceptionally mild conditions: Proposition~\ref{prop:1} requires only that $\rho$ is continuous and bounded from below.

\item We prove consistency and sharp rates of convergence for the regularized M-estimator $\widehat{f}_n$ defined in \eqref{eq:est} with an explicit bias-variance decomposition (Theorem~\ref{thm:1}). In particular,
  \begin{align*}
\int_X \{\widehat{f}_n(x) - f_0(x)\}^2 v(dx) = O_\mathbb{P}\left( \frac{\mathcal{N}_\infty(\lambda)}{n} \right) + O_\mathbb{P}(\lambda^\beta),
 \end{align*}
where $v$ is the distribution of the $x_i$. The first term on the right represents the variance; it is governed by the spectral complexity $\mathcal{N}_\infty(\lambda)$ (introduced in Definition~\ref{def:1}) and is independent of misspecification. The second term represents the squared bias and depends on the source condition parameter $\beta \in (0,1]$ (introduced by \citet{St:2012}). Explicit bounds on $\mathcal{N}_\infty(\lambda)$ are derived for several important kernels, including Gaussian, (tensor and multivariate) Sobolev and zonal kernels. 

\item For tensor Sobolev spaces $\mathcal{H}^m([0,1])^{\otimes d}$ (defined and discussed in Section~\ref{sec:tensor}), we obtain new rates of convergence (Theorem~\ref{thm:2}) that hold for a broad family of loss functions and vastly extend the least-squares analysis of \citet{Lin:2000}. We also establish a direct connection with Sobolev spaces with dominating mixed smoothness widely used in approximation theory; this connection provides insight into why tensor-product-based estimators appear to sidestep the curse of dimensionality.

\item Our methodology, combining functional analysis and empirical processes, is novel and widely applicable. We linearise the objective function around the population counterpart of the estimator (the abstract regularized interpolant, see the appendix). The remainder of this linearisation is handled with symmetrisation and contraction. Our analysis, requiring neither smoothness of the loss function nor global Lipschitz behaviour, recovers and extends existing results to useful loss functions that have not been hitherto studied (popular examples are worked out in detail in Section~\ref{sec:loss}).

\end{enumerate}

Taken together, these contributions provide a flexible, robust, and theoretically sharp framework for nonparametric regression in RKHS that significantly broadens the scope of existing results while delivering explicit and optimal rates of convergence.

\vspace{0.3cm}
\textit{Notation}: In what follows, $\mathcal{L}_v^p(X)$,\ $p \in [1,\infty]$, will denote the space of $p$-integrable functions on a domain $X \subset \mathbbm{R}^d$ with respect to a nonnegative measure $v$. When $v$ is omitted, $\mathcal{L}^p(X)$ will refer to the space of $p$-integrable functions with respect to Lebesgue measure on $X$. $\mathcal{C}(X)$ will denote the space of continuous functions on $X$. The RKHS will be denoted with $\mathcal{H}$, the dependence on the kernel $K$ being implicit but clear from context.  Inner products and norms will be denoted with the traditional notation $\langle \cdot, \cdot \rangle$ and $\|\cdot\|$ respectively. When we need to emphasize the type of inner product or norm used, these will be accompanied with subscripts, e.g., $\langle \cdot, \cdot \rangle_{\mathcal{H}}$ for the standard inner product on $\mathcal{H}$.

\section{Framework and assumptions}

\subsection{Reproducing kernel Hilbert spaces and spectral complexity}
\label{sec:kernel}

Let $\mathcal{H}$ denote a reproducing kernel Hilbert space (RKHS) of real-valued functions on a domain $X \subset \mathbbm{R}^d$ with a continuous positive definite kernel $K: X \times X \to \mathbbm{R}$, that is, for all $N \in \mathbbm{N}$, all sets of pairwise distinct points $\{w_1, \ldots, w_N\} \subset X$ and all $a \in \mathbbm{R}^n$, 
\begin{align*}
\sum_{j=1}^N \sum_{k=1}^N a_j a_k K(w_j,w_k)>0.
\end{align*}
It is known that bounded, translation invariant kernels in $\mathcal{L}^1(\mathbbm{R}^d)$ are positive definite if, and only if, their Fourier transform is nonnegative and non-vanishing \citep[Theorem 6.11]{Wend:2005}; this result covers the commonly used Mat\'ern and Gaussian kernels. The two characteristic features of the RKHS $\mathcal{H}$ are (i) $x \mapsto K(x,y) \in \mathcal{H}$ for every $y \in X$ and (ii) $f(x) = \langle f, K(x,\cdot)\rangle_{\mathcal{H}}$. The reader is referred to \citep{S:2002, Ber:2004, Wend:2005, St:2008} for introductions to RKHS and learning theory.

Throughout, we shall impose the following assumption on $X$ and the distribution (push-forward measure) of the predictor variables $\{x_i\}_{i=1}^n$, $v$. We recall that a measure $v$ is supported on all of $X$ if, and only if, for every non-empty relatively open $O \subset X$, $v(O)>0$.

\begin{enumerate}[label=(A\arabic*), ref=(A\arabic*)]
\item \label{A1} $X$ is a compact subset of $\mathbbm{R}^d$ and the predictors $\{x_i\}_{i=1}^n$ are independent and identically distributed random vectors with distribution $v$ supported on all of $X$.
\end{enumerate}
\noindent
Assumption~\ref{A1} is a clean sufficient condition ensuring that the identity mapping $\mathcal{I}: \mathcal{H} \to \mathcal{L}^2_v(X)$ is injective, which is a necessary and sufficient condition for Mercer's theorem to hold. It is satisfied, for example, if $X = [0,1]^d$ and $v$ has Lebesgue density bounded away from zero and infinity, or, if $X = \mathbb{S}^{d-1}$ (a sphere in $\mathbbm{R}^d$) and $v$ is the canonical surface measure. This level of generality is highly desirable, as it allows for the modelling of non-Euclidean data.

With assumption \ref{A1}	 in place, Mercer's theorem, see, e.g., \citep[Theorem 12.20]{Wain:2019} or \citep[Corollary 3.5]{St:2012}, applies and the kernel $K$ may be expanded as
\begin{align}
\label{eq:merc}
K(x,y) = \sum_{j=1}^{\infty} \mu_j \phi_j(x) \phi_j(y), \quad (x,y) \in X \times X,
\end{align}
where $\{(\mu_j, \phi_j)\}_{j=1}^{\infty}$ are eigenvalue and eigenfunction pairs of the Hilbert-Schmidt integral operator $T_K : \mathcal{L}^2_v(X) \to \mathcal{L}^2_v(X)$ defined as $T_K(f)(x) = \int_{X} K(x,y) f(y) v(dy)$ for $f \in \mathcal{L}^2_v(X)$. The convergence of the series in \eqref{eq:merc} is absolute and uniform on $X \times X$. The positive definiteness of $K$ is inherited by $T_K$ \citep[see, e.g.][Theorem 4.6.4]{Hsing:2015}, so that $\mu_j>0$ and $\mu_j \to 0$ as $j \to \infty$, as Hilbert-Schmidt operators are compact with zero the only accumulation point for their eigenvalues. 

By the spectral theorem \citep[see, e.g.,][Theorem 4.2.4]{Hsing:2015} the eigenfunctions $\{\phi_j\}_{j=1}^{\infty}$ provide a complete orthonormal system (a basis) for $\mathcal{L}^2_v(X)$. This observation combined with Mercer's theorem \eqref{eq:merc} allows the characterization of $\mathcal{H}$ as
\begin{align}
\label{eq:rkhs}
\mathcal{H} = \left\{f \in \mathcal{L}^2_v(X): f = \sum_{j=1}^{\infty} f_j \phi_j , \ \sum_{j=1}^\infty \frac{f_j^2}{\mu_j} < \infty \right\}.
\end{align}
That is, $\mathcal{H} \subset \mathcal{L}^2_v(X)$ consists  of $\mathcal{L}^2_v(X)$-functions whose Fourier series converges sufficiently fast (recall that $\mu_j \to 0$). Moreover, the inner product of the RKHS $\mathcal{H}$ is given by
\begin{align*}
\langle f, g \rangle_{\mathcal{H}} = \sum_{j=1}^{\infty} \frac{f_j g_j}{\mu_j}, \quad f,g \in \mathcal{H}.
\end{align*}
The associated norm is $\|f\|_{\mathcal{H}} := \sqrt{\langle f, f \rangle_{\mathcal{H}}}$. With these definitions, it can be seen that $\langle \phi_i, \phi_j \rangle_{\mathcal{L}^2_v(X)} = \delta_{ij}$ and $\langle \phi_i, \phi_j \rangle_{\mathcal{H}} = \delta_{ij} \mu_i^{-1}$ where $\delta_{ij}$ denotes Kronecker's delta, i.e., $\delta_{ij} = 1$ for $i = j$ and $\delta_{ij} =0$ otherwise. Moreover, $\{\sqrt{\mu_j}\phi_j\}_{j=1}^{\infty}$ is a complete orthonormal system for $\mathcal{H}$. The interested reader is referred to \citep{St:2012} for insightful discussions and counterexamples.

Before we investigate rates of convergence for the regularized M-estimator $\widehat{f}_n$ defined in \eqref{eq:est}, we settle the basic questions of existence and measurability; Proposition~\ref{prop:1} covers all commonly used loss functions.

\begin{prop}
\label{prop:1}
Suppose that \ref{A1} is satisfied, $\rho: \mathbbm{R} \to \mathbbm{R}$ is continuous and bounded from below, i.e., $\inf_{x \in \mathbbm{R}} \rho(x) \geq C$ for some $C \in \mathbbm{R}$, and $\lambda>0$. Then, $\widehat{f}_n$ is well-defined as an element of $\mathcal{H}$ and measurable. 
\end{prop}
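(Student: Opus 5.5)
Fix a sample $\{(x_i,y_i)\}_{i=1}^n$ and write $L_n(f) = n^{-1}\sum_{i=1}^n \rho(y_i - f(x_i)) + \lambda\|f\|_{\mathcal{H}}^2$. The plan is to first reduce the problem to finite dimensions via a representer-type argument, then obtain existence by coercivity and continuity, and finally handle measurability through a measurable selection theorem. Since $\rho$ is not assumed convex, uniqueness may fail. I therefore interpret ``well-defined'' as the existence of a minimizer, with a measurable choice among the minimizers.

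\textbf{Reduction to finite dimensions.} Let $V_n = \Span\{K(x_1,\cdot),\ldots,K(x_n,\cdot)\}\subset\mathcal{H}$ and decompose any $f\in\mathcal{H}$ as $f = g + h$ with $g\in V_n$ and $h\perp V_n$. By the reproducing property, $h(x_i) = \langle h, K(x_i,\cdot)\rangle_{\mathcal{H}} = 0$, so the data-fit term depends only on $g$. Meanwhile $\|f\|_{\mathcal{H}}^2 = \|g\|_{\mathcal{H}}^2 + \|h\|_{\mathcal{H}}^2$. Hence $L_n(g)\le L_n(f)$, with strict inequality unless $h=0$. It therefore suffices to minimize over the finite-dimensional space $V_n$, and every minimizer lies in $V_n$.

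\textbf{Existence.} On $V_n$, the map $f\mapsto L_n(f)$ is continuous. Indeed, $f(x_i)$ is a continuous linear functional, and $\rho$ is continuous. The function is also coercive: since $\rho\ge C$, we have $L_n(f)\ge C + \lambda\|f\|^2_{\mathcal{H}}$. Set $r^2 = (L_n(0) - C)/\lambda$. The sublevel set $\{f\in V_n : L_n(f)\le L_n(0)\}$ is closed and contained in the ball of radius $r$, so it is compact because $V_n$ is finite dimensional. By Weierstrass, a minimizer exists. An alternative that avoids $V_n$ altogether is also available. Bounded sets in $\mathcal{H}$ are weakly compact, point evaluations are weakly continuous, and the norm is weakly lower semicontinuous, so the direct method applies. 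I would mention this alternative, since it does not need to track the dimension of $V_n$ when the $x_i$ coincide.

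\textbf{Measurability.} This is the main obstacle. I would parametrize $V_n$ by coefficients $a\in\mathbb{R}^n$ via $f_a = \sum_j a_j K(x_j,\cdot)$. This gives $L_n(f_a) = n^{-1}\sum_i \rho(y_i - (\mathbf{K}a)_i) + \lambda a^\top \mathbf{K} a$, where $\mathbf{K} = (K(x_i,x_j))_{i,j}$. Note that $\mathbf{K}$ may be singular if the $x_i$ are repeated; this happens with probability zero under \ref{A1} only if $v$ is atomless, so I would not rely on it. The function $(a,\omega)\mapsto L_n(f_a)$ is Carathéodory: it is continuous in $a$ and measurable in the data, because $K$ is continuous and $\rho$ is continuous. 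The minimizer can be restricted to the random compact set $\{a : \lambda a^\top\mathbf{K}a \le L_n(0)-C,\ a\in \Ran(\mathbf{K})\}$; on $\Ran(\mathbf{K})$, a bound on $a^\top\mathbf{K}a$ bounds $\|a\|$, with a random but measurable constant. The measurable maximum theorem (Aliprantis--Border, Theorem 18.19) then yields a measurable selection $\widehat a$ from the argmin correspondence. Composing with the continuous map $(a,x_1,\ldots,x_n)\mapsto f_a\in\mathcal{H}$ gives measurability of $\widehat f_n$ as an $\mathcal{H}$-valued map. Since $\mathcal{H}$ is separable (it has the countable basis $\{\sqrt{\mu_j}\phi_j\}_{j\ge1}$), Borel and weak measurability coincide. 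The delicate point is handling the data-dependent compact set together with the possibly singular $\mathbf{K}$. To avoid both, I would instead try working directly in $\mathcal{H}$: the argmin correspondence has nonempty values that are compact (weakly compact, or norm compact in $V_n$), and the objective is Carathéodory on the Polish space $\mathcal{H}$, so the measurable selection can be applied there.
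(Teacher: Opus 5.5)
Your reduction to $V_n$ and your existence argument are correct. The orthogonal-decomposition step proves directly that every minimizer lies in $V_n$ with no convexity of $\rho$, which is exactly what this proposition needs; the paper instead just cites a representer theorem. Your coefficient parametrization $a\mapsto f_a$ also makes precise something the paper leaves implicit: it writes $L_n$ on $\Omega\times V_n$ even though $V_n$ depends on $\omega$. For existence, the paper runs the direct method in $\mathcal{H}$ (weak compactness of balls, weak continuity of point evaluations, weak lower semicontinuity of the norm), which is your ``alternative''. Your Weierstrass argument on $V_n$ is equally valid.

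The measurability step, however, is not closed. The compactness problem comes from your choice of the measurable maximum theorem. Its hypothesis is a weakly measurable correspondence of \emph{compact feasible sets}; compactness of the argmin sets is part of its conclusion and cannot replace that hypothesis. Your fallback in $\mathcal{H}$ therefore does not work as stated, for three reasons.
\begin{enumerate}
\item The feasible set $\mathcal{H}$ is not norm compact.
\item In the weak topology, $f\mapsto\lambda\|f\|_{\mathcal{H}}^2$ is only lower semicontinuous, so $L_n$ is not Carath\'eodory there.
\item Restricting to $\{f\in V_n(\omega):\lambda\|f\|_{\mathcal{H}}^2\le L_n(0)-C\}$ simply reintroduces the data-dependent compact set you wanted to avoid.
\end{enumerate}

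The paper avoids compactness altogether. The map $(\omega,a)\mapsto L_n(f_a)$ is a Carath\'eodory, hence normal, integrand on $\Omega\times\mathbb{R}^n$, and Rockafellar--Wets, Theorem 14.37, gives a measurable selection $\widehat a$ of the closed-valued argmin mapping, which is nonempty by your existence step. A singular $\mathbf{K}$ is then harmless, since every $\widehat a$ in the argmin gives a minimizer $f_{\widehat a}$. Your observation that $(a,x_1,\ldots,x_n)\mapsto f_a\in\mathcal{H}$ is continuous then finishes the argument.

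If you prefer to keep Aliprantis--Border, replace your constraint set by the ball $\{a:\|a\|\le R(\omega)\}$, where $R(\omega)^2=(L_n(0)-C)/\{\lambda\,\lambda_{\min}^{+}(\mathbf{K})\}$ and $\lambda_{\min}^{+}(\mathbf{K})$ is the smallest positive eigenvalue of $\mathbf{K}$.
\begin{itemize}
\item $\lambda_{\min}^{+}(\mathbf{K})$ is a Borel function of $\mathbf{K}$, and it exists because $\mathbf{K}\neq 0$ (its diagonal entries $K(x_i,x_i)$ are positive).
\item This correspondence is clearly weakly measurable and compact-valued.
\item It contains the $\Ran(\mathbf{K})$-representative $a'$ of every global minimizer $\widehat f$, since $\lambda_{\min}^{+}\|a'\|^2\le a'^{\top}\mathbf{K}a'=\|\widehat f\|_{\mathcal{H}}^2\le (L_n(0)-C)/\lambda$.
\end{itemize}
Hence the constrained minimum equals the global one, and any measurable selector of the constrained argmin is a global minimizer.
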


\begin{proof}
Let $L_n: \mathcal{H} \to \mathbbm{R}$ denote the objective function, i.e., 
\begin{align*}
L_n(f)= \frac{1}{n} \sum_{i=1}^n \rho \left( y_i -f(x_i) \right) + \lambda \|f\|_{\mathcal{H}}^2.
\end{align*}
The infimum $\inf_{f \in \mathcal{H}} L_n(f)$ is finite, as $L_n$ is bounded from below by $C$. Let $\{f_k\}_k$ be any minimizing sequence such that $L_n(f_k) \to \inf_{f \in \mathcal{H}} L_n(f)$, as $k \to \infty$. The minimizing sequence $\{f_k\}_k$ is bounded, for, if that were not true and there was a subsequence $\{f_{k_m}\}_m$ such that $\|f_{k_m}\|_{\mathcal{H}} \to \infty$, then $L_n(f_{k_m}) \to \infty$ and at the same time $L_n(f_{k_m}) \to \inf_{f \in \mathcal{H}} L_n(f)$. This is impossible, as the infimum is finite. Hilbert spaces are reflexive, hence, given that $\{f_k\}_k$ is bounded, there exists a weakly convergent subsequence, which, after relabelling, we still denote with $\{f_k\}_k$. Thus, $f_k \rightharpoonup g$ for some $g \in \mathcal{H}$. The norm is weakly lower semicontinuous with respect to weak convergence, so
$\|g\|_{\mathcal{H}}^2 \leq \liminf_{k \to \infty} \|f_k\|_{\mathcal{H}}^2$.
Moreover, by the reproducing property and weak convergence $f_k(x_i) = \langle f_k, K(x_i,\cdot) \rangle_{\mathcal{H}}  \to \langle g, K(x_i,\cdot) \rangle_{\mathcal{H}} = g(x_i)$ for all $i = 1, \ldots, n$. The continuity of $\rho$ now implies that $L_n$ is weakly lower semicontinuous and
\begin{align*}
\inf_{f \in \mathcal{H}}L_n(f) \leq L_n(g) \leq \liminf_{k \to \infty} L_n(f_k) = \inf_{f \in \mathcal{H}}L_n(f),
\end{align*}
showing that the infimum is attained in $\mathcal{H}$. Call the argmin $\widehat{f}_n$.

The representer theorem \citep[Theorem 5.5]{St:2008} ensures that $\widehat{f}_n \in V_n := \Span\{K(x_i, \cdot),\ i=1, \ldots, n\}$, which is an at most $n$-dimensional subspace that is isomorphic to a subspace of $\mathbbm{R}^n$. The measurability of the minimizer $\widehat{f}_n$ follows from the fact that the objective function $L_n: \Omega \times V_n$:
\begin{align*}
L_n(\omega,f) = \frac{1}{n} \sum_{i=1}^n \rho\left( y_{i}(\omega)-f(x_i(\omega)) \right) + \lambda \|f\|_{\mathcal{H}}^2,
\end{align*}
is a Carath\'eodory function: $\omega \mapsto L_n(\omega,f)$ is measurable for every $f \in V_n$ and $f \mapsto L_n(\omega,f)$ is continuous for every $\omega \in \Omega$, by the finite-dimensionality of $V_n$. In the terminology of \citep[Example 14.29][]{Rock:1998}, $L_n$ is a normal integrand. Hence, by Theorem 14.37 of these authors, the argmin $\widehat{f}_n$ is measurable.
\end{proof}

It is worth noting that, for convex $\rho$, the strict convexity of $f \mapsto \|f\|_{\mathcal{H}}^2$ (recall that Hilbert spaces are uniformly convex) makes $L_n$ strictly convex on $\mathcal{H}$. Therefore, $\widehat{f}_n$ is unique even when $\rho$ is convex but not strictly convex, e.g., the quantile loss; this need not be the case for general penalty functionals possessing non-trivial null spaces arising, e.g., in smoothing spline estimation. But beyond convex losses, the proposition applies to very robust bounded loss functions, such as the Tukey or Hampel losses, guaranteeing the existence of a measurable minimizer, though in this case uniqueness for finite samples cannot be assured.

Since our goal is to derive rates of convergence with respect to the $\mathcal{L}^2_v(X)$-norm it is convenient to equip $\mathcal{H}$ with a different inner product that captures the effect of the penalization. The inner product most suited for our analysis is given by
\begin{align}
\label{eq:ip}
\langle f,g \rangle_{\mathcal{H},\lambda} := \langle f,g \rangle_{\mathcal{L}^2_v(X)} + \lambda \langle f, g \rangle_{\mathcal{H}}, \quad f,g \in \mathcal{H},
\end{align}
for each $\lambda>0$. The associated norm is $\|f\|_{\mathcal{H},\lambda} := \sqrt{\langle f, f \rangle_{\mathcal{H},\lambda}}$. Thus, $\langle \cdot, \cdot \rangle_{\mathcal{H},\lambda}$ and $\|\cdot\|_{\mathcal{H},\lambda}$ depend on $n$, via $\lambda$. While at a first glance the norm $\|\cdot\|_{\mathcal{H},\lambda}$ may appear to induce a different topology on $\mathcal{H}$ relative to $\|\cdot\|_{\mathcal{H}}$, this is not the case, as these norms are equivalent. Specifically, on the one hand, it is clear that $\lambda^{1/2}\|f\|_{\mathcal{H}} \leq  \|f\|_{\mathcal{H},\lambda}$ and, on the other hand, $\mathcal{H}$ embeds continuously (even compactly) into $\mathcal{L}_v^2(X)$ \citep[see, e.g.,][Lemma 2.6]{St:2012} and so there exists $c_0>0$ such that $\|f\|_{\mathcal{L}^2_v(X)}^2 \leq c_0 \|f\|_{\mathcal{H}}^2$ for all $f \in \mathcal{H}$.  Combining these two bounds, $\lambda^{1/2} \|f\|_{\mathcal{H}} \leq \|f\|_{\mathcal{H},\lambda} \leq (c_0+\lambda)^{1/2} \|f\|_{\mathcal{H}}$, for all $f \in \mathcal{H}$, establishing the equivalence of the norms and the induced topologies.

The equivalence of the norms $\|\cdot\|_{\mathcal{H}}$ and $\|\cdot\|_{\mathcal{H},\lambda}$ implies that $\mathcal{H}$ equipped with $\langle \cdot, \cdot \rangle_{\mathcal{H},\lambda}$ is an RKHS on its own right (the evaluation functionals are still continuous) and thus there exists a unique kernel $K_{\lambda}: X \times X \to \mathbbm{R}$ (depending on $\lambda$) such that $f(x) = \langle K_{\lambda}(x,\cdot), f \rangle_{\mathcal{H},\lambda}$ for all $f \in \mathcal{H}$. Recalling that the eigenfunctions $\{\phi_j\}_{j=1}^{\infty}$ appearing in \eqref{eq:merc} ``diagonalize" both $\langle \cdot, \cdot \rangle_{\mathcal{L}^2_v(X)}$ and $\langle \cdot, \cdot \rangle_{\mathcal{H}}$, it is straightforward to verify that $K_{\lambda}$ may be written as
\begin{align}
\label{eq:RK}
K_{\lambda}(x,y) = \sum_{j=1}^{\infty} \frac{\phi_j(x) \phi_j(y)}{1+\lambda \gamma_j},
\end{align}
where $\gamma_j = 1/\mu_j$ and for each $\lambda>0$ the series converges, similarly to \eqref{eq:merc}, absolutely and uniformly. In particular, $K_{\lambda}$, as the uniform limit of continuous functions, is continuous on $X \times X$.

Continuity of $K_{\lambda}$ on the compact $X$ implies the compact embedding $\mathcal{H} \hookrightarrow \mathcal{C}(X)$, see, e.g., \citep[Proposition 4.30]{St:2008}. The smallest possible embedding constant is critical in our analysis and we propose to call it \textit{spectral complexity}.

\begin{definition}[Spectral complexity]
\label{def:1} For each $\lambda \in (0,\infty)$, the spectral complexity $\mathcal{N}_{\infty}(\lambda) \in (0,\infty)$ is defined as
\begin{align*}
\mathcal{N}_{\infty}(\lambda) := \sup_{f \in \mathcal{H}, f \neq 0} \frac{\|f\|^2_{\infty}}{\|f\|_{\mathcal{H},\lambda}^2}.
\end{align*}
\end{definition}
\noindent
The spectral complexity is not to be confused with the effective dimension widely used in the learning literature, e.g., in \citep{Fischer:2020}, and typically denoted with $\mathcal{N}(\lambda)$; in our notation, $\mathcal{N}(\lambda) = \sum_{j=1}^{\infty} (1+\lambda \gamma_j)^{-1}$. To clarify the distinction,  let us observe that, using \eqref{eq:RK}, by monotone convergence and the $\mathcal{L}^2_v(X)$-orthonormality of $\{\phi_j\}_{j=1}^{\infty}$, we have $\mathcal{N}(\lambda) = \int_{X} K_{\lambda}(x,x) v(dx)$. By the reproducing property and the Schwarz inequality, we obtain $\|f\|_{\mathcal{L}^2_v(X)}^2 = \int_{X} |\langle f, K_{\lambda}(x,\cdot) \rangle_{\mathcal{H},\lambda}|^2 v(dx) \leq \mathcal{N}(\lambda) \|f\|_{\mathcal{H},\lambda}^2$, for every $f \in \mathcal{H}$, as $K_{\lambda}(x,x) = \|K_{\lambda}(x,\cdot)\|_{\mathcal{H},\lambda}^2$. By contrast, Definition \ref{def:1} gives $\|f\|_{\infty}^2\leq \mathcal{N}_{\infty}(\lambda) \|f\|_{\mathcal{H},\lambda}^2$, for every $f \in \mathcal{H}$. Thus, while $\mathcal{N}(\lambda)$ yields $\mathcal{L}_v^2(X)$-control, $\mathcal{N}_{\infty}(\lambda)$ yields uniform control in the unit ball $\{f \in \mathcal{H}: \|f\|_{\mathcal{H},\lambda} \leq 1 \}$. This distinction is essential for the analysis of general convex losses whose subgradient may not be a linear function, but only a locally linear one at the population level, see assumption \ref{A4} and the proof of Theorem~\ref{thm:1} below. From this perspective, the spectral complexity $\mathcal{N}_{\infty}(\lambda)$ is the natural complexity measure for the class of estimators considered in this paper.

It may be verified that $\mathcal{N}_{\infty}(\lambda)$ is well-defined and finite for each $\lambda>0$; in particular, the following crucial relation holds.
\begin{prop}
\label{prop:2}
For each $\lambda \in (0,\infty)$, the spectral complexity is well-defined and
\begin{align*}
\mathcal{N}_{\infty}(\lambda) =  \sup_{x \in X} K_{\lambda}(x,x) < \infty.
\end{align*}
\end{prop}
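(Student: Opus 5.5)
The proof will be a two-sided bound, using only the reproducing property of $K_\lambda$ with respect to $\langle\cdot,\cdot\rangle_{\mathcal{H},\lambda}$ and the continuity of $K_\lambda$ on the compact set $X$.

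\emph{Finiteness.} As noted after \eqref{eq:RK}, $K_\lambda$ is continuous on $X\times X$, so $x\mapsto K_\lambda(x,x)$ is continuous on the compact set $X$ under \ref{A1}. Hence $\sup_{x\in X}K_\lambda(x,x)$ is finite and attained. It is also strictly positive, since $K_\lambda(x,x)=\sum_j \phi_j(x)^2/(1+\lambda\gamma_j)\ge 0$ and cannot vanish identically, as its $v$-integral equals $\mathcal{N}(\lambda)>0$.

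\emph{Upper bound.} Fix $f\in\mathcal{H}$ and $x\in X$. By the reproducing property of $K_\lambda$ and the Cauchy--Schwarz inequality in $(\mathcal{H},\langle\cdot,\cdot\rangle_{\mathcal{H},\lambda})$,
\begin{align*}
|f(x)| = |\langle f, K_\lambda(x,\cdot)\rangle_{\mathcal{H},\lambda}| \le \|f\|_{\mathcal{H},\lambda}\,\|K_\lambda(x,\cdot)\|_{\mathcal{H},\lambda}.
\end{align*}
Applying the reproducing property to $K_\lambda(x,\cdot)$ itself gives $\|K_\lambda(x,\cdot)\|_{\mathcal{H},\lambda}^2=K_\lambda(x,x)$. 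Taking the supremum over $x$ yields $\|f\|_\infty^2\le \sup_x K_\lambda(x,x)\,\|f\|_{\mathcal{H},\lambda}^2$, so $\mathcal{N}_\infty(\lambda)\le\sup_x K_\lambda(x,x)$. In particular $\mathcal{N}_\infty(\lambda)$ is finite, which settles well-definedness.

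\emph{Lower bound.} Take $f=K_\lambda(x,\cdot)$ for any $x$ with $K_\lambda(x,x)>0$. Then
\begin{align*}
\|f\|_\infty^2\ge f(x)^2 = K_\lambda(x,x)^2 = K_\lambda(x,x)\,\|f\|_{\mathcal{H},\lambda}^2,
\end{align*}
so the ratio in Definition~\ref{def:1} is at least $K_\lambda(x,x)$. Taking the supremum over $x$ gives $\mathcal{N}_\infty(\lambda)\ge\sup_x K_\lambda(x,x)$, and combining with the upper bound proves the equality.

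No step is a real obstacle. The only point needing care is that $K_\lambda(x,\cdot)$ belongs to $\mathcal{H}$ and reproduces with respect to $\langle\cdot,\cdot\rangle_{\mathcal{H},\lambda}$. I would confirm this directly from \eqref{eq:RK} and the diagonalization $\langle\phi_i,\phi_j\rangle_{\mathcal{H},\lambda}=\delta_{ij}(1+\lambda\gamma_j)$. Membership holds because the coefficients $c_j=\phi_j(x)/(1+\lambda\gamma_j)$ satisfy $\sum_j c_j^2\gamma_j<\infty$, and this follows from the convergence of $\sum_j\phi_j(x)^2/(1+\lambda\gamma_j)$.
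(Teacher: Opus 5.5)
Your proof is correct and is essentially the paper's argument. The paper restricts to the unit sphere by homogeneity, swaps the two suprema, and uses the dual-norm identity $\sup_{\|f\|_{\mathcal{H},\lambda}=1}|\langle f,K_\lambda(x,\cdot)\rangle_{\mathcal{H},\lambda}|=\|K_\lambda(x,\cdot)\|_{\mathcal{H},\lambda}$, which is exactly your Cauchy--Schwarz upper bound together with attainment at $f=K_\lambda(x,\cdot)$; finiteness likewise follows from continuity of $K_\lambda$ on the compact $X$, and your added checks (membership of $K_\lambda(x,\cdot)$ in $\mathcal{H}$ and strict positivity of the supremum) are valid details the paper takes for granted.
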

\begin{proof}

By homogeneity, the result of the proposition is equivalent to $\mathcal{N}_{\infty}(\lambda) = \sup_{\|f\|_{\mathcal{H},\lambda} = 1} \|f\|_{\infty}^2 = \sup_{x \in X} K_{\lambda}(x,x)$. The reproducing property entails $|f(x)|^2 = |\langle f, K_{\lambda}(x,\cdot)\rangle_{\mathcal{H},\lambda}|^2$.  Interchanging suprema and using the dual norm on $\mathcal{H}$ equipped with $ \langle \cdot, \cdot \rangle_{\mathcal{H},\lambda}$, we obtain
\begin{align*}
\sup_{\|f\|_{\mathcal{H},\lambda} = 1} \sup_{x \in X} |f(x)|^2 =  \sup_{x \in X} \sup_{\|f\|_{\mathcal{H},\lambda} = 1} |\langle f, K_{\lambda}(x,\cdot) \rangle_{\mathcal{H},\lambda}|^2 =  \sup_{x \in X} \|K_{\lambda}(x,\cdot)\|_{\mathcal{H}, \lambda}^2,
\end{align*}
as $\sup_{\|f\|_{\mathcal{H},\lambda} = 1} |\langle f, K_{\lambda}(x,\cdot) \rangle_{\mathcal{H},\lambda}|^2 = \| K_{\lambda}(x,\cdot)\|_{\mathcal{H},\lambda}^2$. But, by the symmetry of $K_{\lambda}$ and the reproducing property, $\langle K_{\lambda}(x,\cdot), K_{\lambda}(x,\cdot) \rangle_{\mathcal{H},\lambda} = K_{\lambda}(x,x)$, hence
\begin{align*}
\sup_{\|f\|_{\mathcal{H},\lambda} = 1} \|f\|_{\infty}^2 = \sup_{x \in X} K_{\lambda}(x,x).
\end{align*}
Therefore, $\mathcal{N}_{\infty}(\lambda) = \sup_{x \in X} K_{\lambda}(x,x)$. For $\lambda>0$ the supremum is finite, as $x \mapsto K_{\lambda}(x,x)$ is continuous and $X$ is compact. 
\end{proof}

Inspection of our main results in Theorems \ref{thm:1} and \ref{thm:2} below reveals the second reason for the importance of spectral complexity: it determines the asymptotic variance, even under misspecification. Thus, in order to obtain sharp, explicit rates of convergence, we need to be able to bound $\mathcal{N}_{\infty}(\lambda)$ effectively. While this may be a difficult task, in general, the examples below show that we can often determine the behaviour of $\mathcal{N}_{\infty}(\lambda)$ as $\lambda \to 0$, which is all that is required for asymptotic rates of convergence. Before we give specific examples, let us note that if there exists an $\alpha \in (0,1)$ such that $\sup_{x \in X}\sum_{j=1}^{\infty} \mu_j^{\alpha} \phi_j^2(x)<\infty$ (under our assumptions this is always the case when $\alpha=1$) then
\begin{align*}
\mathcal{N}_{\infty}(\lambda) = \sup_{x \in X} \sum_{j=1}^\infty \frac{\mu_j^{1-\alpha}}{\mu_j+\lambda} \mu_j^{\alpha} \phi_j^2(x) \leq \sup_{t>0} \frac{t^{1-\alpha}}{t+\lambda} \sup_{x \in X} \sum_{j=1}^{\infty} \mu_j^{\alpha} \phi_j^2(x) = O(\lambda^{-\alpha}),
\end{align*}
but this polynomial bound may be loose when the eigenvalues decay very fast.

\begin{kernelexample}[Kernels with uniformly bounded eigenfunctions] Suppose that the integral operator $T_K$ possesses uniformly bounded eigenfunctions, that is, $\sup_j \|\phi_j\|_{\infty} \leq M$. Then, by Proposition \ref{prop:2}, $\mathcal{N}_{\infty}(\lambda) \leq M^2\sum_{j=1}^{\infty} (1+\lambda \gamma_j)^{-1}$. If for large $j$, $\mu_j =O(j^{-\alpha})$ for some $\alpha>1$ then, by integral approximation, $\mathcal{N}_{\infty}(\lambda) = O(\lambda^{-1/\alpha})$. If for large $j$, $\mu_j =O( e^{-c j^{1/d}})$ for some $c>0$ and $d \geq 1$ then $\mathcal{N}_{\infty}(\lambda) = O([\log(1/\lambda)]^d)$ as $\lambda \to 0$. Examples of kernels with uniformly bounded eigenfunctions are the Sobolev kernel on $[0,1]$ with $v$ Lebesgue measure, see, e.g., \citep[p. 60]{Hsing:2015} and the references therein, and translation invariant kernels on $[0,1]$ based on 1-periodic functions with $v$ again Lebesgue measure, see \citep[pp. 397--398]{Wain:2019}
\end{kernelexample}

\begin{kernelexample}[Gaussian kernel] Suppose $X = [0,1]^d$, $K(x,y) = e^{-\|x-y\|_{\mathbbm{R}^d}^2/h}$ for $h>0$ and $v$ has Lebesgue density bounded away from zero and infinity. It is known that $\mu_j = O( e^{-c j^{1/d}})$ and $\|\phi_j\|_{\infty} \leq b j^2$ for some $b,c>0$ (see Theorem 7 of \citep{Dommel:2025}). Therefore, by Proposition~\ref{prop:2}, $\mathcal{N}_{\infty}(\lambda) = O([\log(1/\lambda)]^{5d})$, as $\lambda \to 0$.
\end{kernelexample}

\begin{kernelexample}[Zonal kernels on spheres] Let $X = \mathbb{S}^{d-1}$ and consider the kernel $K(x,y) = (2-\langle x, y \rangle_{\mathbbm{R}^d})^{-1}$ for $x,y \in \mathbb{S}^{d-1}$. It is known, see Table 1 in \citep{S:2021} and \citep[Lemma 17.3 and Proposition 17.6]{Wend:2005}, that $\mu_j = O(2^{-j})$ and $\sup_{x \in \mathbb{S}^{d-1}} \phi_j^2(x) = O(j^{d-2})$, as $j \to \infty$. Therefore, $\mathcal{N}_{\infty} (\lambda) = O([\log(1/\lambda)]^{d-1})$, as $\lambda \to 0$.
\end{kernelexample}

\begin{kernelexample}[Sobolev kernels on $\mathbbm{R}^d$]

Sobolev kernels on $X \subset \mathbbm{R}^d$ with $d>1$, in general, do not possess uniformly bounded eigenfunctions, but we can still estimate $\mathcal{N}_{\infty}(\lambda)$ as follows. Assume that $v$ has Lebesgue density bounded away from zero and infinity and recall that the Sobolev space $\mathcal{H}^{m}(X)$ consists of functions in $\mathcal{L}^2(X)$ having all weak derivatives of order $\leq m$ in $\mathcal{L}^2(X)$. It is well-known that $\mathcal{H}^m(X)$ is an RKHS if, and only if, $2m>d$. Moreover, up to equivalent norms, $\mathcal{H}^m(X)$ is the RKHS induced by the restriction of the Mat\'ern kernel with parameter $v= m-d/2$ to $X$ \citep[see, e.g.][Corollary 10.48]{Wend:2005}. Suppose $\lambda \in (0,1]$. By Sobolev's inequality \citep[Theorem 3.9]{Agmon:1965} (taking $r = \lambda^{-1} \geq 1$ in Agmon's theorem), there exists a universal $c_0>0$ such that, for every $f \in \mathcal{H}^{m}(X)$,
\begin{align*}
\|f\|_{\infty}^2 \leq c_0 \lambda^{-d/(2m)}(\|f\|_{\mathcal{L}^2_v(X)}^2+\lambda \|f\|_{\mathcal{H}^m(X)}^2) = c_0 \lambda^{-d/(2m)} \|f\|_{\mathcal{H}^m(X), \lambda}^2.
\end{align*}
For $f \neq 0$, dividing and taking the supremum yields $\mathcal{N}_{\infty}(\lambda) = O(\lambda^{-d/(2m)})$ for all $\lambda \in (0,1]$.
\end{kernelexample}

Example K1 shows that in the presence of uniformly bounded eigenfunctions the spectral complexity $\mathcal{N}_{\infty}(\lambda)$ behaves like the effective dimension $\mathcal{N}(\lambda)$, as $\lambda \to 0$. But, as examples K2--K4 demonstrate, uniformly bounded eigenfunctions are not necessary for our analysis and estimates of $\mathcal{N}_{\infty}(\lambda)$ as $\lambda \to 0$ can be obtained even when $\sup_{j} \|\phi_j\|_{\infty} = \infty$. To obtain such estimates one may use Proposition~\ref{prop:2} (Examples K2--K3) or the very definition of $\mathcal{N}_{\infty}(\lambda)$ (Example K4), whichever is more practical in any given setting. 

\subsection{Assumptions on the loss function}
\label{sec:loss}

We now present and discuss the assumptions imposed on the loss function $\rho$. Our assumptions comprise two weak regularity conditions and one Fisher consistency condition that is widely satisfied, as we show by means of several examples.

\begin{enumerate}[label=(A\arabic*), ref=(A\arabic*)]
\setcounter{enumi}{1}
\item \label{A2} The loss function $\rho: \mathbbm{R} \to \mathbbm{R}$ is convex and absolutely continuous with derivative $\psi$ existing almost everywhere.
\item \label{A3} There exist constants $\kappa, M_1>0$ such that for all $|y| \leq \kappa$
\begin{align*}
|\psi(x+y)-\psi(x)| \leq M_1,
\end{align*}
uniformly in $x \in \mathbbm{R}$.
\item \label{A4} The errors $\{\epsilon_i\}_{i=1}^n$ are i.i.d., independent of the predictor vectors $\{x_i\}_{i=1}^n$, satisfy $\mathbb{E}[\psi(\epsilon_1)] =0$, $\mathbb{E}[|\psi(\epsilon_1)|^2]<\infty$ and there exists a $\gamma>0$ such that
\begin{align*}
\mathbb{E} \left[ \psi(\epsilon_1+t) \right] = \gamma t + o(t),
\end{align*}
as $t \to 0$.
\end{enumerate}

Assumption \ref{A2} ensures that the fundamental theorem of calculus holds in the Lebesgue sense, i.e., that we can write $\rho(x) = \rho(0) + \int_{0}^x \psi(t) dt$. This representation is used repeatedly in our proofs. The assumption is fulfilled  by all commonly used loss functions, including loss functions that are not continuously differentiable, such as the quantile loss. Assumption \ref{A3} is due to \citet{Bai:1994} and has been referred to as \textit{uniformly bounded local increments}. It is similarly a general assumption that is fulfilled by all commonly used losses, sufficient conditions including either uniform continuity or merely boundedness of $\psi$. For example, it is fulfilled by both $\psi(x) = x$  and $\psi(x) = \sign(x)$.
 
Assumption \ref{A4} is placed jointly on $\psi$ and the distribution of the errors $\{\epsilon_i\}_{i=1}^n$. The first part, namely the independence of errors and predictors, is standard in regression analysis. The moment conditions $\mathbb{E}[\psi(\epsilon_1)] = 0$ and $\mathbb{E}[|\psi(\epsilon_1)|^2 < \infty$ are widely used in M-estimation, see, e.g., \citep[Section 10.6]{Mar:2019}, and generalize the classical Gauss--Markov conditions. For bounded score functions $\psi$, the second moment condition is automatically satisfied whereas a sufficient (but by no means necessary) condition ensuring $\mathbb{E}[\psi(\epsilon_1)] = 0$ is that $\psi$ is bounded and odd and the distribution of $\epsilon_1$ is symmetric about zero, e.g., any $t$-distribution with positive degrees of freedom.

The second part of assumption \ref{A4} requires that $g(t) := \mathbb{E}[\psi(\epsilon_1+t)]$ is differentiable at $0$ with strictly positive derivative $\gamma$. This is a necessary condition for the minimizer to be well-separated in the limit and can be shown to hold for a wide variety of loss functions under appropriate, loss-specific, conditions on the error. In particular, it is trivial for $\rho(x) = x^2/2$ provided that $\mathbb{E}[\epsilon_1]=0$ and $\mathbb{E}[\epsilon_1^2] < \infty$. We now give several other interesting examples; throughout, we use $F$ to denote the cumulative distribution function of the error $\epsilon_1$, i.e., $F(t) = \Pr(\epsilon_1 \leq t)$.

\begin{lossexample}[Absolute loss and quantiles]
Consider first the absolute loss $\rho(x) = |x|$. We assume (i) $F$ has median at $0$, i.e., $F(0)=1/2$, and (ii) $F$ has Lebesgue density $f$ in a neighbourhood about $0$ with $f(0)>0$. Then, direct calculation yields
\begin{align*}
\mathbbm{E}[\psi(\epsilon_1+t)] = -\int_{x+t <0} F(dx) + \int_{x+t \geq 0}  F(dx) = 1-2F(-t) = 2f(0)t + o(t),
\end{align*}
as $t \to 0$. Thus, \ref{A4} is satisfied with $\gamma = 2 f(0)>0$. More generally, for the quantile loss $\rho_{\tau}(x) = x(\tau-1(x<0))$ with $\tau \in (0,1)$ we have $\mathbb{E}[\psi_{\tau}(\epsilon+t)] = \tau - F(-t-)$ where $F(x-)$ denotes the limit from the left at $x$. Therefore, \ref{A4} is satisfied with $\gamma = f(0)$ provided that $\epsilon_1$ has CDF $F$ satisfying (i) $F(0) = \tau$ and (ii) $F$ has Lebesgue density $f$ in a neighbourhood about $0$ satisfying $f(0)>0$.
\end{lossexample}

\begin{lossexample}[Vapnik's $\delta$-insensitive loss]
Fix $\delta \geq 0$ and consider the convex loss function $\rho_{\delta}(x) = \max(|x|-\delta,0)$. We assume (i) $F(\delta) + F(-\delta) = 1$ (ii) $F$ has Lebesgue density $f$ in a neighborhood of $-\delta$ and $\delta$ and (iii) $f(-\delta)+f(\delta)>0$. Then,
\begin{align*}
\mathbbm{E}[\psi_{\delta}(\epsilon_1+t)] = \int_{|x+t| \geq \delta} \sign(x+t) F(dx) = 1-F(\delta-t) - F(-\delta-t) = \{f(\delta)+f(-\delta)\} t + o(t),
\end{align*}
as $t \to 0$. Hence, \ref{A4} holds with $\gamma = f(\delta) + f(-\delta)>0$. For $\delta = 0$, $\rho_{\delta}(x)$ reduces to $\rho(x) = |x|$ and our assumptions reduce to $f(0)>0$, as in the previous example.
\end{lossexample}

\begin{lossexample}[Huber loss]
Fix $k>0$ and consider the Huber loss  $\rho_k(x) = (x^2/2) 1(|x| \leq k) + k(|x| - k/2) 1(|x|>k)$ blending absolute and square losses. We assume (i) $F$ is symmetric about $0$ and (ii) $F$ is continuous at $\{-k,k\}$. Then, integration by parts yields
\begin{align*}
\mathbb{E}[\psi_k(\epsilon_1+t) ] & = \int_{|x+t|\leq k} (x+t) F(dx) + k\int_{|x+t|> k} \sign(x+t) F(dx) 
\\ & = \int_{-k-t}^{k-t} x F(dx) + t\{F(k-t) -F(-k-t)\} + k(1-F(k-t))-kF(-k-t)
\\ & = -\int_{-k-t}^{k-t} F(x) d x + k = (2F(k)-1)t + o(t),
\end{align*}
as $t \to 0$. Note that $2F(k)>1$, because $k>0$ and $F(0)=1/2$. Thus, \ref{A4} is satisfied with $\gamma = 2F(k)-1$.
\end{lossexample}

\begin{lossexample}[Expectile loss] Fix $\alpha \in (0,1)$ and consider the expectile loss $\rho_{\alpha}(x) = x^2/2|\alpha-1(x < 0)|$. It is clear that $\rho_{\alpha}$ is continuously differentiable with derivative $\psi_{\alpha}(x) = \alpha x 1(x>0) + x(1-\alpha) 1(x \leq 0)$. We assume (i) the error has $\alpha$-expectile equal to zero, i.e., $\mathbb{E}[\psi_{\alpha}(\epsilon_1)] = 0$, (ii) $\mathbb{E}[\epsilon_1^2] < \infty$ and (iii) $F$ is continuous at $0$. Under these assumptions we obtain
\begin{align*}
\mathbb{E}[\psi_{\alpha}(\epsilon_1 +t)] & = \alpha \int_{-t}^{\infty} x F(dx) + \alpha t(1-F(-t)) + (1-\alpha) \int_{-\infty}^{-t} x F(dx) + (1-\alpha) t F(-t)
\\ & = \alpha \left\{ \int_{-t}^{\infty} x F(dx) - \int_{0}^{\infty} x F(dx) \right\} + (1-\alpha) \left\{\int_{-\infty}^{-t} x F(dx) - \int_{-\infty}^{0} x F(dx) \right\}
\\ & \quad + \alpha t (1-F(0)) + (1-\alpha) tF(0) + o(t) = \{\alpha+(1-2\alpha)F(0)\}t+o(t),
\end{align*}
as $t \to 0$, by the continuity of $F$ at $0$. Observe that for $\alpha>0$ the term inside the curly brackets is strictly positive and so assumption \ref{A4} is satisfied with $\gamma = \alpha + (1-2 \alpha)F(0)$.
\end{lossexample}

The reader will have noticed the underlying principle in the previous examples: our assumptions are general enough to permit score functions with a finite number of discontinuities so long as $F$ is absolutely continuous in a neighbourhood about these discontinuities. Score functions $\psi$ that are continuous but not everywhere differentiable, e.g., Huber and expectile losses, are easier to deal with requiring essentially that $F$ is continuous at the points of non-differentiability of $\psi$, i.e., these points are not atoms of $F$. In both cases, smoothness can be traded between $\psi$ and $F$ allowing for our conditions to be satisfied very broadly.

Assumption~\ref{A4} and our proofs can be adapted to cover independent but non-identically distributed $\{\epsilon_i\}_{i=1}^n$ at the cost of more cumbersome notation.  In that case, assumption~\ref{A4} would read $\mathbb{E} [\psi(\epsilon_i)] = 0$, $ \ \sup_n \max_{i \leq n} \mathbb{E}[|\psi(\epsilon_i)|^2] < \infty$ and 
\begin{align*}
\sup_n \max_{i \leq n} \left| \mathbb{E}[\psi(\epsilon_i+t)] - \gamma_i t\right| = o(t),
\end{align*}
as $t \to 0$, with $\{\gamma_i\}_{i=1}^n$ satisfying $0<\inf_n \min_{i \leq n} \gamma_i \leq \sup_n \max_{i \leq n} \gamma_i < \infty$. 

\section{Main results for general RKHS}
\label{sec:rkhs}

We aim to present bounds for the $\mathcal{L}^2_v(X)$-distance between $\widehat{f}_n$ and $f_0$ both in the case $f_0 \in \mathcal{H}$ and $f_0 \notin \mathcal{H}$. We are primarily interested in the case in which $f_0$ is less smooth than functions from $\mathcal{H}$ (the smoothness of these functions being determined by the smoothness of $K$, see, e.g., \citep[Theorem 10.45]{Wend:2005}). For this reason, we will assume that $f_0$ lies in an intermediate space with functions that are smoother than those in $\mathcal{L}^2_v(X)$ (which are technically not even functions) but no smoother than those in $\mathcal{H}$. In particular, and in line with previous works \citep{St:2012,Fischer:2020, Zhang:2023}, we assume that $f_0$ lies in the $\beta$-power space of functions $[\mathcal{H}]^{\beta}$.

\begin{enumerate} [label=(A\arabic*), ref=(A\arabic*)]
\setcounter{enumi}{4}
\item \label{A5} There exists a $\beta \in (0,1]$ such that
\begin{align*}
f_0 \in [\mathcal{H}]^{\beta} := \left\{ \sum_{j=1}^{\infty} f_j \mu^{\beta/2}_j \phi_j: \sum_{j=1}^{\infty} |f_j|^2 < \infty \right\}.
\end{align*}
\end{enumerate}
The space $[\mathcal{H}]^{\beta}$ corresponds to the range of the linear operator $f \mapsto \sum_{j=1}^{\infty} \mu_j^{\beta/2} f_j \phi_j$ for $f = \sum_j f_j \phi_j$ with $\sum_i |f_i|^2<\infty$, i.e., $f \in \mathcal{L}_v^2(X)$. Equipped with the $\beta$-power inner product
\begin{align*}
\bigg \langle\sum_{j=1}^{\infty} f_j \mu^{\beta/2}_j \phi_j,  \sum_{j=1}^{\infty} g_j \mu^{\beta/2}_j \phi_j \bigg \rangle_{\beta} := \sum_{j=1}^{\infty} f_j g_j,
\end{align*}
the space $[\mathcal{H}]^{\beta}$ is isometrically isomorphic to the sequence space $\ell^2(\mathbbm{N})$ and as such a separable Hilbert space on its own right. The limiting cases $\beta = 0$ and $\beta=1$ are of great interest, as they correspond, on the one hand, to the space $\mathcal{L}^2_v(X)$ and, on the other hand, to $\mathcal{H}$ (recall the equivalent definition \eqref{eq:rkhs}). Since $\mu_j \to 0$, increasing $\beta$ dampens high frequencies $f_j \phi_j$ bringing about smoother functions. Thus, by allowing $\beta \in (0,1]$ we can capture different degrees of smoothness and misspecification on a continuous scale.

A technical condition required in our analysis is the existence of an $\alpha \in (0, \beta)$ such that  $[\mathcal{H}]^{\alpha}$ is continuously embedded into $\mathcal{L}_{v}^{\infty}(X)$, the space of $v$-essentially bounded functions on $X$.
\begin{enumerate} [label=(A\arabic*), ref=(A\arabic*)]
\setcounter{enumi}{5}
\item \label{A6} There exists an $\alpha \in (0,\beta)$ such that the inclusion  $[\mathcal{H}]^{\alpha} \hookrightarrow \mathcal{L}^{\infty}_v(X)$ is continuous.
\end{enumerate}
Assumption \ref{A6} is the critical assumption (EMB) in \citep{Fischer:2020} with $\alpha < \beta$; it is not needed when $\beta = 1$ in which case our analysis only requires \ref{A1}--\ref{A4}. With these definitions, our main result for this section is as follows. 

\begin{theorem}
\label{thm:1}
Suppose that assumptions \ref{A1}--\ref{A6} are satisfied and $\lambda \to 0$ in such a way that $\mathcal{N}^2_{\infty}(\lambda)/n \to 0$ and $ \lambda^{\beta}\mathcal{N}_{\infty}(\lambda)	 \to 0$, as $n \to \infty$. Then,
\begin{align*}
\int_{X} \{\widehat{f}_n(x) - f_0(x)\}^2 v(dx) = O_{\mathbb{P}}\left( \frac{\mathcal{N}_{\infty}(\lambda)}{n} \right) + O_{\mathbb{P}} ( \lambda^{\beta}),
\end{align*}
as $n \to \infty$.
\end{theorem}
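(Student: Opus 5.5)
The plan is to linearise the objective around a population-level regularized interpolant and then control the remainder with empirical process tools. Define the deterministic target
\begin{align*}
f_\lambda := \argmin_{f \in \mathcal{H}} \left[ \frac{\gamma}{2}\|f - f_0\|^2_{\mathcal{L}^2_v(X)} + \lambda \|f\|_{\mathcal{H}}^2 \right],
\end{align*}
whose $j$-th Fourier coefficient is $\gamma f_{0,j}/(\gamma + 2\lambda\gamma_j)$. Then split
\begin{align*}
\|\widehat f_n - f_0\|^2_{\mathcal{L}^2_v} \le 2\|\widehat f_n - f_\lambda\|^2_{\mathcal{L}^2_v} + 2\|f_\lambda - f_0\|^2_{\mathcal{L}^2_v}.
\end{align*}

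\emph{Bias.} Under \ref{A5} with $f_0=\sum_j f_j\mu_j^{\beta/2}\phi_j$, the second term equals $\sum_j f_j^2 \mu_j^{\beta}\{c\lambda\gamma_j/(1+c\lambda\gamma_j)\}^2$. Using $\sup_{t>0} t^{\beta}/(1+t)^{2}\lesssim 1$ gives $O(\lambda^\beta)$. In the same way, $\|f_\lambda - f_0\|^2_{\mathcal{H},\lambda}=O(\lambda^\beta)$ and $\|f_\lambda\|_{[\mathcal H]^\alpha}$ is bounded. By \ref{A6}, $\|f_\lambda - f_0\|_\infty$ is then bounded, and in fact tends to $0$ after interpolating between $[\mathcal H]^\alpha$ and $[\mathcal H]^\beta$. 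When $\beta=1$ we instead use $\|f_\lambda-f_0\|_\infty^2\le \mathcal N_\infty(\lambda)\|f_\lambda-f_0\|^2_{\mathcal H,\lambda}=O(\mathcal N_\infty(\lambda)\lambda)\to0$, which is where $\lambda^\beta\mathcal N_\infty(\lambda)\to0$ enters. This uniform smallness is what keeps all arguments $\epsilon_i + t$ within the local regime of \ref{A4}.

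\emph{Variance.} Write $h=f-f_\lambda$ and work on the ball $B_\delta=\{h:\|h\|_{\mathcal H,\lambda}\le\delta\}$ with $\delta^2 = M\mathcal N_\infty(\lambda)/n$. Because $L_n$ is convex (assumption \ref{A2}), it suffices to show that with probability tending to one $L_n(f_\lambda+h)>L_n(f_\lambda)$ for every $h$ with $\|h\|_{\mathcal H,\lambda}=\delta$; convexity then places the minimiser inside $B_\delta$. Using $\rho(a-s)-\rho(a)=-\int_0^s\psi(a-u)\,du$, decompose
\begin{align*}
L_n(f_\lambda+h)-L_n(f_\lambda) = -\frac1n\sum_i \psi(\epsilon_i+r_i)h(x_i) + R_n(h) + \lambda\{\|f_\lambda+h\|^2_{\mathcal H}-\|f_\lambda\|^2_{\mathcal H}\},
\end{align*}
where $r_i=f_0(x_i)-f_\lambda(x_i)$ and
\begin{align*}
R_n(h)=\frac1n\sum_i\int_0^{h(x_i)}\{\psi(\epsilon_i+r_i)-\psi(\epsilon_i+r_i-u)\}\,du .
\end{align*}
The terms are handled as follows.

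\emph{(a) Expectation of $R_n$.} By \ref{A4} applied conditionally on $x_i$, together with uniform smallness of $r_i$ and of $h(x_i)$ (since $\|h\|_\infty\le \mathcal N_\infty^{1/2}\delta\to0$ because $\mathcal N_\infty^2/n\to0$), we get $\mathbb E R_n(h)=\frac{\gamma}{2}\|h\|^2_{\mathcal L^2_v}(1+o(1))$.

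\emph{(b) Linear term.} The mean of the linear term is $-\mathbb E[(\gamma r + o(r))h]$. By the first-order condition defining $f_\lambda$, this cancels the linear part of the penalty difference, namely $2\lambda\langle f_\lambda,h\rangle_{\mathcal H}$, up to an error $o(\|r\|_{\mathcal L^2})\|h\|$ that is negligible relative to $\delta^2$ plus the bias. The centred part has sup over the sphere equal to $\|\frac1n\sum_i \xi_i K_\lambda(x_i,\cdot)\|_{\mathcal H,\lambda}\,\delta$, with second moment $\le \mathbb E|\psi|^2\,\mathcal N_\infty(\lambda)\,\delta^2/n$ by the reproducing property and Proposition~\ref{prop:2}. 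This is $O_{\mathbb P}(\delta^2/\sqrt M)$.

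\emph{(c) Quadratic terms.} The remaining quadratic term is $\lambda\|h\|^2_{\mathcal H}$, so together with (a) we obtain $\min(\gamma/2,1)\|h\|^2_{\mathcal H,\lambda}$.

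\emph{Main obstacle.} The hardest step is the uniform concentration $\sup_{h\in B_\delta}|R_n(h)-\mathbb E R_n(h)|=o_{\mathbb P}(\delta^2)$ without Lipschitz $\psi$. I would symmetrise, then use \ref{A3} on increments bounded by $\kappa$: the integrand is bounded by $M_1|h(x_i)|$, and a conditional variance bound gives $\mathbb E\{\cdot\}^2\lesssim \|h\|_\infty\,\mathbb E|h|$. Combined with a contraction/Talagrand-type bound over the unit ball of $(\mathcal H,\|\cdot\|_{\mathcal H,\lambda})$, this should give a Rademacher complexity of order $\delta\,(\mathcal N(\lambda)/n)^{1/2}\,\|h\|_\infty^{1/2}\le \delta^2\,\mathcal N_\infty^{3/4}\delta^{1/2}\ldots$, which is $o(\delta^2)$ precisely when $\mathcal N_\infty^2/n\to0$. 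This is where the rate condition is consumed, and it is the step most likely to need adjustment, for instance through a peeling argument.

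Concluding, with probability tending to one $\|\widehat f_n-f_\lambda\|^2_{\mathcal H,\lambda}\lesssim \mathcal N_\infty(\lambda)/n+\lambda^\beta$. Adding the bias bound completes the claim.
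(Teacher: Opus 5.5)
Your architecture matches the paper's: linearise around the regularized interpolant, run a convexity/sphere argument in $\|\cdot\|_{\mathcal{H},\lambda}$, control the linear term through $K_\lambda$ and Proposition~\ref{prop:2}, and treat the remainder by symmetrisation and contraction. The genuine flaw is the radius $\delta^2=M\mathcal{N}_\infty(\lambda)/n$, which omits the bias. Step (a) is not $\frac{\gamma}{2}\|h\|^2_{\mathcal{L}^2_v(X)}(1+o(1))$. Assumption \ref{A4} only gives $g(t):=\mathbb{E}[\psi(\epsilon_1+t)]=\gamma t+o(t)$ at $t=0$, so $g(r_i)-g(r_i-u)=\gamma u+o(1)(|r_i|+|u|)$, and integrating leaves a cross term $o(1)\,\mathbb{E}[|r(x_1)||h(x_1)|]$. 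Step (b) likewise leaves $\mathbb{E}[o(r(x_1))h(x_1)]$. Both are of size $o(1)\lambda^{\beta/2}\delta$. On the sphere the only positive term is $c\delta^2$, so there is no ``bias'' term to absorb them. They are dominated only if $o(1)\lambda^{\beta/2}\lesssim\delta$, which you cannot guarantee when $\lambda^\beta\gg\mathcal{N}_\infty(\lambda)/n$. So with your $\delta$ the argument does not close, and your final bound $\mathcal{N}_\infty(\lambda)/n+\lambda^\beta$ does not follow from your set-up.

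The fix is the paper's radius $DC_n^{1/2}$ with $C_n=\mathcal{N}_\infty(\lambda)/n+\lambda^\beta$. Then every term linear in $h$ is $O(DC_n)$ against $c_0D^2C_n$. This includes $|2\lambda\langle f_\lambda,h\rangle_{\mathcal{H}}|\le 2\lambda^{1/2}\|f_\lambda\|_{\mathcal{H}}\|h\|_{\mathcal{H},\lambda}=O(\lambda^{\beta/2})DC_n^{1/2}$ (by Lemma~\ref{lem:intprop}) and $\gamma\langle r,h\rangle_{\mathcal{L}^2_v(X)}$ (by Cauchy--Schwarz). So the paper needs neither your $\gamma$-weighted interpolant nor the first-order cancellation. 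Those would only pay off for a sharper statement like $\|\widehat f_n-f_\lambda\|^2_{\mathcal{H},\lambda}=O_{\mathbb{P}}(\mathcal{N}_\infty(\lambda)/n)+o(\lambda^\beta)$. With this radius, $\|h\|_\infty\le\mathcal{N}_\infty(\lambda)^{1/2}DC_n^{1/2}\to0$ uses both rate conditions. Hence $\lambda^\beta\mathcal{N}_\infty(\lambda)\to0$ is consumed for every $\beta$, not only in your $\beta=1$ bias estimate.

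Second, your `main obstacle' is not an obstacle. You do not need $\sup|R_n(h)-\mathbb{E}R_n(h)|=o_{\mathbb{P}}(\delta^2)$; a bound of order $\delta^2/\sqrt{M}$ suffices, exactly as for the centred linear term. In the paper's scaling this is $O_{\mathbb{P}}(1)DC_n$, linear in $D$. Once $\|h\|_\infty\le\kappa$, assumption \ref{A3} makes $u\mapsto\int_0^u\{\psi(\epsilon_i+r_i)-\psi(\epsilon_i+r_i-s)\}\,ds$ an $M_1$-Lipschitz function vanishing at $0$. Symmetrisation with Rademacher $\xi_i$ and the Ledoux--Talagrand contraction inequality then give $\mathbb{E}\sup|R_n-\mathbb{E}R_n|\lesssim M_1\delta\,\mathbb{E}\|n^{-1}\sum_i\xi_iK_\lambda(x_i,\cdot)\|_{\mathcal{H},\lambda}\le M_1\delta\sqrt{\mathcal{N}_\infty(\lambda)/n}$, which is the paper's Lemma~\ref{lem:I1md}. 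The variance-sensitive Talagrand/peeling route you outline is unnecessary and, as written, unfinished. The condition $\mathcal{N}^2_\infty(\lambda)/n\to0$ is consumed not there but in keeping $\|h\|_\infty\to0$, so that \ref{A3} and \ref{A4} apply. (Minor: $\|f_\lambda-f_0\|_{\mathcal{H},\lambda}$ is infinite for $\beta<1$ because $f_0\notin\mathcal{H}$; your use of it for $\beta=1$ is fine.)
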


Theorem~\ref{thm:1} establishes an asymptotic decomposition of the $\mathcal{L}^2_v(X)$-error into two terms, which correspond to the asymptotic variance and the squared penalization bias respectively. It is interesting to observe that the variance $\mathcal{N}_{\infty}(\lambda)/n$ is unaffected by misspecification, i.e., by $\beta < 1$. Instead, misspecification only affects the penalization bias $\lambda^{\beta}$. Given that $\lambda \to 0$, the latter is larger for smaller $\beta \in (0,1]$. To illustrate the result with concrete rates of convergence, consider first the Sobolev space $\mathcal{H}^{m}(X)$ discussed in Example K4. The variance term is $(n \lambda^{d/2m})^{-1}$ and is balanced with the squared bias $\lambda^{\beta}$ when $\lambda \asymp n^{-2m/(2m\beta+d)}$ leading to the rate $n^{-2m\beta/(2m\beta+d)}$. By \citep[Theorem 2]{Fischer:2020} this is the optimal (minimax) rate of convergence. Consider next the zonal kernel from Example K2. The best rate of convergence is $[\log(n)]^{d-1}/n$ obtained when $\lambda \asymp n^{-1/\beta} [\log(n)]^{(d-2)/\beta}$. Owing to the smoothness of the kernel, the rate is almost parametric and is unaffected by misspecification. The minimax rates for exponentially decaying eigenvalues are not known, but we conjecture that, up to a logarithmic power, this rate is optimal.

It is worth emphasizing that, while the learning literature has overwhelmingly focused on the least-squares estimator with $\rho(x) = x^2/2$, Theorem~\ref{thm:1} holds for all loss functions satisfying \ref{A2}--\ref{A4}. As noted, the main technical hurdle in the analysis of these estimators is the lack of closed-form solution upon which most theoretical results to date are based, see, e.g., \citep{Smale:2005, Zhang:2023}. The dependence on this closed form solution  means that none of those arguments carries over to the general situation. We overcome this difficulty by developing an asymptotic linearisation of the objective function, which is delicate, as we do not assume that $\rho$ is smooth. We study the objective function in a shrinking neighbourhood of the abstract regularized interpolant (defined in the appendix) and show that there exists a minimizer in the interior, which by strict convexity can only be $\widehat{f}_n$. The interested reader is referred to the appendix for the complete argument.

The present treatment is novel even compared to the modern analysis of penalized M-estimators with empirical process theory  \citep[see, e.g.,][Chapter 12]{VDG:2000} in two important ways. First, we do not require that $\rho$ should be Lipschitz. The Lipschitz condition is widely used to estimate the Rademacher complexity by contraction, an elegant step simplifying many proofs, but excluding widely used losses such as expectile and $L^p$ losses with $p>1$. We take a different approach by using the more general \ref{A2} and \ref{A3} to decompose the objective function. We apply the contraction principle on the remainder of the linearisation rather than on the standard symmetrised empirical process. Secondly, our approach leads to an informative error decomposition as opposed to a single rate $n^{-s}$ one typically obtains through empirical process theory alone. We believe that such an error decomposition is valuable, as, on the one hand, it makes the bias-variance trade-off explicit, and on the other hand, it is the first step towards second order inference, namely the construction of confidence bands for $f_0$. 

\section{Tensor product RKHS and dominating mixed smoothness}
\label{sec:tensor}

Having established the general asymptotic properties of regularized M-estimators on an RKHS $\mathcal{H}$ with domain $X$, we now turn to the intriguing tensor construction allowing one to model data on Cartesian domains $X \times X \ldots \times X$. While the general recipe to be described applies to any RKHS, even with multidimensional domain $X$, we focus here on the Sobolev space $\mathcal{H}^m(X)$ with $X \subset \mathbbm{R}$ due to its connection with flexible modelling. A particularly appealing feature of tensor Sobolev spaces is their ability to naturally incorporate interaction effects between variables; the interested reader is referred to \citep{Lin:2000} for insightful discussions and results for $\rho(x) = x^2/2$. Here, we briefly describe the tensor construction and illustrate how our methodology applies and yields new results recovering and vastly extending those of \citep{Lin:2000}.

Our precise setting is as follows. Let $X \subset \mathbbm{R}$ be compact and connected, say $X=[0,1]$ without loss of generality. The standard Sobolev space $\mathcal{H}^m([0,1])$ (with its usual inner product and norm) is an RKHS for all $m \in \mathbbm{N}$ with kernel $K:[0,1] \times [0,1] \to \mathbbm{R}$. As discussed in Example K1, the eigenfunctions of $K$, $\{\phi_j\}_{j=1}^{\infty}$, form a complete orthonormal system for $\mathcal{L}^2([0,1])$ and $\{\mu_j^{1/2} \phi_j\}_{j=1}^{\infty}$ is a complete orthonormal system for $\mathcal{H}^m([0,1])$. The $d$-dimensional Sobolev tensor product space 
$\mathcal{H}^m([0,1])^{\otimes d}$ is defined as
\begin{align*}
\mathcal{H}^m([0,1])^{\otimes d} := \left\{f: [0,1]^d \to \mathbbm{R}: f = \sum_{i_1, \ldots i_d} f_{i_1, \ldots, i_d} \phi_{i_1}(x_1) \ldots \phi_{i_d}(x_d), \ \sum_{i_1, \ldots, i_d} \frac{|f_{i_1, \ldots, i_d}|^2}{\mu_{i_1} \ldots \mu_{i_d}} < \infty \right\},
\end{align*}
where each of the $d$ sums runs over $\mathbbm{N}$. This space results from the closure of the linear span  of the products $\{\phi_{i_1} \ldots \phi_{i_d}\}_{i_1, \ldots, i_d}$ under the inner product  $\langle \phi_{i_1} \ldots \phi_{i_d}, \phi_{j_1} \ldots \phi_{j_d} \rangle_{\mathcal{H}^m(X)^{\otimes d}} := \prod_{k=1}^d \frac{\delta_{i_k j_k}}{\mu_{i_k}}$ extended to finite sums. It can be shown that $\mathcal{H}^m([0,1])^{\otimes d}$ is isometrically isomorphic to $\ell^2(\mathbbm{N}^d)$ and as such separable and complete. It is also an RKHS with kernel 
\begin{align*}
\widetilde{K}(x,y) := \prod_{i=1}^d K(x_i,y_i), \quad x,y \in [0,1]^d,
\end{align*}
\citep[see, e.g.,][Lemma 4.6]{St:2008}. Moreover, when $K$ is positive definite on $[0,1]$, then, by a well-known theorem of I. Schur, so is $\widetilde{K}$ on $[0,1]^d$.

As previously, it is convenient to equip $\mathcal{H}^m([0,1])^{\otimes d}$ with a penalty-weighted inner product: $ \langle f, g \rangle_{\mathcal{H}^m([0,1])^{\otimes d}, \lambda} := \langle f, g \rangle_{\mathcal{L}^2([0,1]^d)} + \lambda \langle f, g \rangle_{\mathcal{H}^m([0,1])^{\otimes d}}$. The reproducing kernel for this inner product is
\begin{align}
\label{eq:tRK}
\widetilde{K}_{\lambda}(x,y) = \sum_{i_1,\ldots, i_d} \frac{\phi_{i_1}(x_1) \ldots \phi_{i_d}(x_d)\phi_{i_1}(y_1) \ldots \phi_{i_d}(y_d)}{1+\lambda \prod_{j=1}^d \gamma_{i_j}}, \quad x, y \in [0,1]^d,
\end{align}
where $\gamma_{i_j} = \mu_{i_j}^{-1}$ and the series converges absolutely and uniformly for each $\lambda>0$.  Our main result for this section is presented in Theorem~\ref{thm:2} below.

\begin{theorem}
\label{thm:2}
Suppose that $\mathcal{H} = \mathcal{H}^m([0,1])^{\otimes d}$ and the predictors $\{x_i\}_{i=1}^n$ are i.i.d. vectors on $[0,1]^d$ with distribution $v$ having Lebesgue density bounded away from zero and infinity. Suppose also that  assumptions \ref{A2}--\ref{A6} are satisfied and $\lambda \to 0$ in such a way that $\lambda^{-1/m} [\log(1/\lambda)]^{2(d-1)}/n \to 0$ and $\lambda^{\beta-1/(2m)} [\log(1/\lambda)]^{d-1} \to 0$. Then,
\begin{align*}
\int_{[0,1]^d} \{\widehat{f}_n(x) - f_0(x)\}^2 dx = O_{\mathbb{P}}\left( \frac{\left[\log\left(\frac{1}{\lambda} \right)\right]^{d-1} }{n \lambda^{\frac{1}{2m}}} \right) + O_{\mathbb{P}} ( \lambda^{\beta}),
\end{align*}
as $n \to \infty$.
\end{theorem}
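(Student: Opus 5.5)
Theorem~\ref{thm:2} follows from Theorem~\ref{thm:1} once we establish $\mathcal{N}_{\infty}(\lambda) = O(\lambda^{-1/(2m)}[\log(1/\lambda)]^{d-1})$ for $\widetilde{K}_\lambda$. Substituting this bound, the conditions $\mathcal{N}_\infty^2(\lambda)/n\to0$ and $\lambda^\beta\mathcal{N}_\infty(\lambda)\to0$ become exactly the two stated rate conditions, and the conclusion becomes the displayed rate. We must also check \ref{A1}. The cube $[0,1]^d$ is compact and $v$ has density bounded below, so $v$ charges every relatively open set. Positive definiteness of $\widetilde{K}$ follows from Schur's theorem. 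Since $v$ has density bounded away from zero and infinity, the $\mathcal{L}^2_v$ and Lebesgue $\mathcal{L}^2$ norms are equivalent, so the integral in the conclusion may be taken with respect to $dx$. The same equivalence means that the penalty-weighted norms built from $\mathcal{L}^2_v$ and from $\mathcal{L}^2([0,1]^d)$ are equivalent uniformly in $\lambda$. Consequently, $\mathcal{N}_\infty(\lambda)$ computed with either norm agrees up to constants, directly from Definition~\ref{def:1}, and we may work with the explicit kernel \eqref{eq:tRK} built from the Lebesgue eigenbasis.

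For the bound on $\mathcal{N}_\infty(\lambda)$ we use Proposition~\ref{prop:2}: $\mathcal{N}_\infty(\lambda)=\sup_x\widetilde{K}_\lambda(x,x)$. By Example K1, the univariate Sobolev eigenfunctions under Lebesgue measure are uniformly bounded, $\|\phi_j\|_\infty\le M$, and $\mu_j\asymp j^{-2m}$, so $\gamma_j\asymp j^{2m}$. Hence
\begin{align*}
\sup_{x}\widetilde{K}_\lambda(x,x)\le M^{2d}\sum_{i_1,\ldots,i_d\ge1}\frac{1}{1+c\lambda (i_1\cdots i_d)^{2m}}.
\end{align*}
Group the terms by the product $k=i_1\cdots i_d$. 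The number of ordered $d$-tuples with product $k$ is the divisor function $\tau_d(k)$, whose partial sums satisfy $\sum_{k\le N}\tau_d(k)=O(N(\log N)^{d-1})$. Summation by parts, or comparison with the integral $\int_1^\infty (\log t)^{d-1}/(1+c\lambda t^{2m})\,dt$, then gives a bound of order $\lambda^{-1/(2m)}[\log(1/\lambda)]^{d-1}$. Concretely, split at $N=\lambda^{-1/(2m)}$. The part $k\le N$ contributes at most $\sum_{k\le N}\tau_d(k)\lesssim N(\log N)^{d-1}$. For the tail $k>N$, dyadic blocks $[2^lN,2^{l+1}N)$ contribute at most $\lambda^{-1}(2^lN)^{-2m}\cdot 2^{l+1}N(\log(2^{l+1}N))^{d-1}$. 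Since $m\ge1$, this sums geometrically in $l$ to the same order.

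The main obstacle is this lattice-point/divisor counting step, specifically obtaining the sharp power $d-1$ of the logarithm rather than a cruder $d$. I would first try induction on $d$. Writing $S_d(N)=\#\{(i_1,\ldots,i_d):\prod i_j\le N\}$, we have $S_d(N)=\sum_{i\le N}S_{d-1}(N/i)$. If $S_{d-1}(N)\lesssim N(\log N)^{d-2}$ for $N\ge2$ (with $S_{d-1}(N)\lesssim1$ for $N<2$), then $S_d(N)\lesssim N(\log N)^{d-2}\sum_{i\le N}1/i\lesssim N(\log N)^{d-1}$. A minor remaining point is that \ref{A5}--\ref{A6} are assumed directly for this tensor space, so Theorem~\ref{thm:1} applies verbatim.
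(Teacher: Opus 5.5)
Your proposal is correct, and it differs from the paper's proof in two ways.

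\textbf{Bounding the sum.} For the key estimate $\mathcal{N}_\infty(\lambda)=O(\lambda^{-1/(2m)}[\log(1/\lambda)]^{d-1})$, the paper does not count lattice points. It compares the sum with the integral of $(1+\lambda x_1^{2m}\cdots x_d^{2m})^{-1}$ over $(1,\infty)^d$. It then applies the multiplicative change of variables $z_k=x_1\cdots x_k$ (Jacobian $\prod_{k<d}z_k^{-1}$), which collapses this to $\frac{1}{(d-1)!}\int_1^\infty(\log z)^{d-1}(1+\lambda z^{2m})^{-1}\,dz$. Finally it rescales by $w=\lambda^{1/(2m)}z$.

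Your divisor-function count is the discrete version of the same hyperbolic-cross volume computation: induction for $\sum_{k\le N}\tau_d(k)$, a split at $N=\lambda^{-1/(2m)}$, and a dyadic tail that is geometric because $2m>1$.

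The paper's route is shorter and exhibits the constant $1/(d-1)!$. Yours stays on the lattice, so it never has to control the sum-to-integral discretisation error. In particular, it avoids the faces where some $i_j=1$. These contribute lower-dimensional sums of order $\lambda^{-1/(2m)}[\log(1/\lambda)]^{d-2}$, which is harmless but not $O(1)$.

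\textbf{Reduction to Theorem~\ref{thm:1}.} The paper does not invoke Theorem~\ref{thm:1} as a black box. It works throughout in the Lebesgue-weighted norm, whose kernel is \eqref{eq:tRK}, and re-runs the argument, modifying the lower bound in Lemma~\ref{lem:I1} using the density bounds.

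You instead observe that the $v$-weighted and Lebesgue-weighted versions of $\|\cdot\|_{\mathcal{H},\lambda}$ are equivalent uniformly in $\lambda$. Hence the two spectral complexities agree up to constants, and Theorem~\ref{thm:1} applies verbatim; the $\mathcal{L}^2_v$ error then dominates the Lebesgue one. This is cleaner and avoids re-verifying the lemmas.
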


To appreciate how striking this result is, recall that by Example K4 and Theorem~\ref{thm:1}, had our target been in $\mathcal{H}^m([0,1]^d)$ (the standard isotropic Sobolev space on $[0,1]^d$), the variance term of the RKHS estimator would have been $(n\lambda^{d/(2m)})^{-1}$, and this is, in general, unimprovable. By contrast, for an $\mathcal{H}^m([0,1])^{\otimes d}$ target, the variance of the tensor RKHS estimator is, up to a log term, $(n\lambda^{1/(2m)})^{-1}$. The latter is substantially smaller as $\lambda \to 0$, even though the domains are the same. In this light, Theorem~\ref{thm:2} appears to suggest that nonparametric regression in tensor RKHS avoids the curse of dimensionality. However, the reality is more subtle, because $\mathcal{H}^m([0,1])^{\otimes d}$ consists of substantially smoother functions than those in $\mathcal{H}([0,1]^d)$.

The crucial observation explaining this puzzling phenomenon is that $\mathcal{H}^m([0,1])^{\otimes d}$ can be identified, up to equivalent norms, with the Sobolev space with dominating mixed smoothness on $[0,1]^d$ of order $m$, well-known in approximation theory \citep{Triebel:2019}. In particular, when the domain is $\mathbbm{R}^d$ this identification is the subject of Theorem 2.1 in \citep{Sickel:2009}; the identification on $[0,1]^d$ follows by combining the extension operator in \citep[Proposition 1.25]{Triebel:2019} with the fact that bounded linear extension and restriction operators lead to equivalent norms (argue as in the proof of Corollary 10.48 in \citep{Wend:2005}). Using this identification, we have the continuous inclusion
\begin{align*}
\mathcal{H}^{dm}([0,1]^d) \subset \mathcal{H}^m([0,1])^{\otimes d},
\end{align*}
which is the best possible. Therefore, we may heuristically view $\mathcal{H}^m([0,1])^{\otimes d}$ as ``almost" equal to $\mathcal{H}^{dm}([0,1]^d)$ and should expect an asymptotic variance of order $(n \lambda^{d/(2dm)})^{-1} = (n \lambda^{1/(2m)})^{-1}$. Up to a log-term, this is precisely the variance obtained in Theorem~\ref{thm:2}.  To the best of our knowledge, the connection between tensor Sobolev spaces and Sobolev spaces with dominating mixed smoothness had not been drawn previously in the either the learning or statistics literatures. 

Specializing Theorem~\ref{thm:2} to the case $f_0 \in \mathcal{H}^m([0,1])^{\otimes d}$, that is, $\beta = 1$, and taking $\lambda \asymp \{n[\log(n)]^{(1-d)}\}^{-2m/(2m+1)}$, we obtain
\begin{align*}
\int_{[0,1]^d} \{\widehat{f}_n(x) - f_0(x)\}^2 dx = O_{\mathbb{P}} \left( \left\{n [\log(n) ]^{1-d} \right\}^{-\frac{2m}{2m+1}}\right),
\end{align*}
which is in agreement with the rate in \citep[Theorem 1.2]{Lin:2000} derived for the special case $\rho(x) = x^2/2$, also assuming that $v$ possesses Lebesgue density bounded away from zero and infinity. That author also proved that this is the minimax (optimal) rate of convergence \citep[Theorem 1.1]{Lin:2000}. Hence, greatly extending existing results, Theorem~\ref{thm:2} establishes the minimax optimality of an entire family of tensor RKHS estimators under mild assumptions.

\section{Practical implementation}

The estimators can be readily implemented using the representer theorem, which guarantees the finite-dimensional representation $\widehat{f}_n(\cdot) = \sum_{i=1}^n K(x_i, \cdot) \beta_i$, cf. Proposition~\ref{prop:1}. Its squared $\mathcal{H}$-norm $\|\widehat{f}_n\|_{\mathcal{H}}^2$ can be shown from first principles to equal $\beta^{\top} K \beta$ where $K \in \mathbbm{R}^{n \times n}$ with $K_{ij} = K(x_i,x_j)$. Plugging this representation into the objective function \eqref{eq:est}, we need only determine
\begin{align*}
\widehat{\beta}_n = \argmin_{\beta \in \mathbbm{R}^n} \left[ \frac{1}{n} \sum_{i=1}^n \rho(y_i- K_i^{\top} \beta) + \lambda \beta^{\top} K \beta \right],
\end{align*}
thereby transforming \eqref{eq:est} to a finite-dimensional problem. For each given $\lambda>0$, the coefficient vector $\widehat{\beta}_n$ may be identified through the well-established iteratively reweighted least-squares (IRLS) algorithm, see, e.g., \citep[Chapter 9]{Mar:2019}. For convex losses, the algorithm is guaranteed to converge to $\widehat{\beta}_n$, but requires that $\lim_{x \to 0} \rho^{\prime}(x)/x$ exists, which is not the case for the quantile loss $\rho_{\tau}(x) = x(\tau-1(x<0))$, $\tau \in (0,1)$. For this reason, we replace $\rho_{\tau}$ with its local quadratic approximation about $0$ proposed by \citet{Nyckha:1995}, viz,
\begin{align*}
\widetilde{\rho}_{\tau,\epsilon}(x) = \begin{cases} \rho_{\tau}(x), & |x|>\epsilon, \\ \tau x^2/\epsilon, & 0 \leq x \leq \epsilon, \\ (1-\tau)x^2/\epsilon, & -\epsilon \leq x \leq 0. \end{cases}
\end{align*}
Here, $\epsilon>0$ is a (small) tuning parameter. It is clear that $\|\widetilde{\rho}_{\tau,\epsilon}-\rho_{\tau}\|_{\infty} = \epsilon \max(\tau,(1-\tau))/4$ and so smaller values of $\epsilon$ provide better approximations to $\rho_{\tau}$; we take $\epsilon = 10^{-3}$ in our implementations. The IRLS is applicable for $\widetilde{\rho}_{\tau,\epsilon}$ and, in our experience, converges fast to the minimizer $\widehat{\beta}_n$.
\begin{figure}[H]
\centering
\subfloat{\includegraphics[width=0.48\textwidth]{"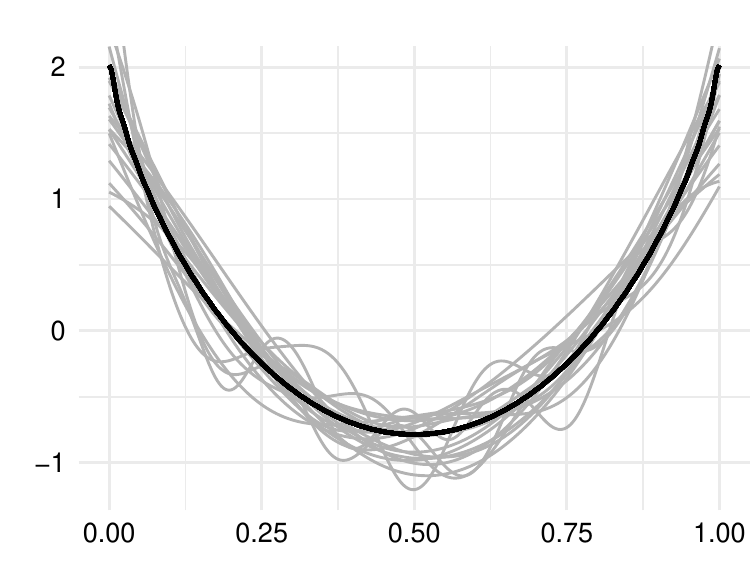"}}
\subfloat{\includegraphics[width=0.48\textwidth]{"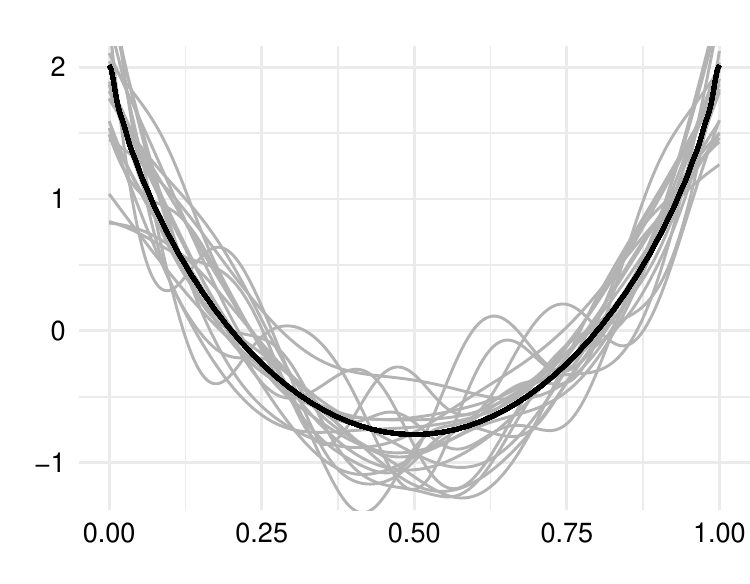"}}
\caption{20 estimates with $\beta = 0.5$ and Gaussian errors of the least squares (left) and least absolute deviations (right) estimators. The solid black line represents the true regression function.}
\label{fig:1}
\end{figure}
To select the penalty parameter $\lambda>0$, we propose to minimize the robust leave-one-out cross-validation criterion given by
\begin{align*}
\RCV(\lambda) = \sum_{i=1}^n  \frac{w_i(\lambda) r_i^2(\lambda)}{(1-h_i(\lambda))^2},
\end{align*}
where $r_i(\lambda)$ are the residuals: $r_i(\lambda) = y_i - K_i^{\top} \widehat{\beta}_n(\lambda)$, $w_i(\lambda)$ are weights defined as $w_i(\lambda) = \rho^{\prime}(r_i(\lambda))/r_i(\lambda)$ and $h_i(\lambda)$ are the diagonal elements of the weighted hat matrix obtained upon convergence of the IRLS algorithm, that is, $H(\lambda) = K(K W(\lambda) K + n\lambda K)^{-1} K W(\lambda)$ where $W(\lambda) = \diag(w_1(\lambda), \ldots, w_n(\lambda))$. Note that $K$ is positive definite, so we may rewrite this as $H(\lambda) = K(W(\lambda) K + n\lambda \mathcal{I}_{n \times n})^{-1}W(\lambda)$. If $w_i(\lambda)=1$ for all $i=1, \ldots, n$, $\RCV(\lambda)$ reduces to the commonly used leave-one-out cross-validation. Fast implementations of the least-squares, least absolute deviations and Huber estimators as well as  \textsf{R}-scripts reproducing the tables and figures of this paper may be found at  \url{https://github.com/ioanniskalogridis/Generalized-nonparametric-regression-in-RKHS}. 

\begin{figure}[H]
\centering
\subfloat{\includegraphics[width=0.48\textwidth]{"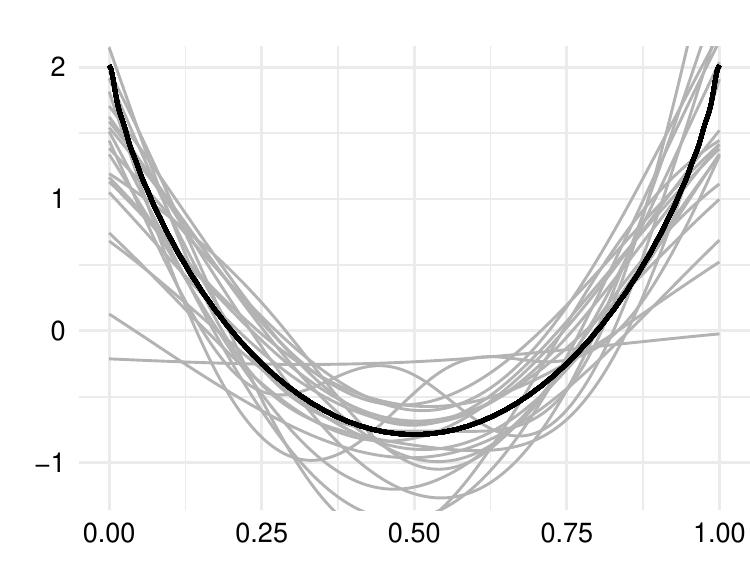"}}
\subfloat{\includegraphics[width=0.48\textwidth]{"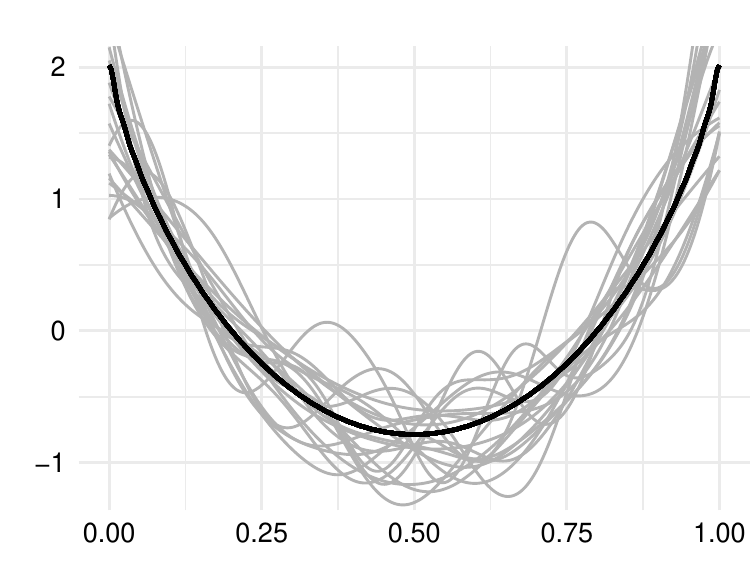"}}
\caption{20 representative estimates with $\beta = 0.5$ and $t_2$ errors of the least squares (left) and least absolute deviations (right) estimators. The solid black line represents the true regression function.}
\label{fig:2}
\end{figure}

\section{Numerical illustrations}

In our numerical experiments, we study the effects of (i) heavy-tailed errors $\{\epsilon_i\}_{i=1}^n$ (ii) target function roughness and (iii) dimensionality on the estimates. The estimators to be compared are
\begin{itemize}
\item The least-squares estimator with $\rho(x) = x^2$ in \eqref{eq:est}, abbreviated as LS.
\item The least absolute deviations estimator with $\rho(x) = |x|$ in \eqref{eq:est}, abbreviated as LAD.
\item The Huber estimator with $\rho_{k}(x) = (x^2/2) 1(|x| \leq k) + k(|x| - k/2) 1(|x|>k)$ with $k=1.345$ in \eqref{eq:est}, abbreviated as Huber.
\end{itemize}
To ensure that the estimators are comparable we have implemented them ourselves with an \textsf{R}-interface \citep{R:2026}, but with a C++ back-end for enhanced computational efficiency. We begin with the one-dimensional setting. Here, the observations have been generated according to
\begin{align*}
y_i = \sum_{j=1}^{50} j^{-2 \beta} j^{-2/3} \sqrt{2}\cos(2 \pi(j-1) x_i) + \epsilon_i, \quad (i=1,\ldots, n),
\end{align*}
with $\{x_i\}_{i=1}^n \overset{iid}{\sim} \mathrm{Unif}[0,1]$ and Gaussian or student $t_2$ errors $\{\epsilon_i\}_{i=1}^n$. The parameter $\beta>0$ controls the decay of Fourier coefficients and hence the smoothness of the regression function. We estimate the regression function $f_0(x):=\sum_{j=1}^{50} j^{-2 \beta} j^{-2/3} \cos(2 \pi (j-1) x)$ using the Mat\'ern kernel with $v=2.5$ corresponding, up to equivalent norms, to the Sobolev space $\mathcal{H}^2([0,1])$, see Example K4. A useful spectral heuristic is $\mu_j \asymp j^{-4}$ and so we consider values of $\beta \in \{0.5, 0.75, 1\}$ reflecting severe misspecification ($\beta = 0.5$), mild misspecification ($\beta = 0.75)$ and correct specification $(\beta = 1)$, respectively. We evaluate performance with the mean-squared error $\MSE = 300^{-1} \sum_{j=1}^{300} (\widehat{f}_n(t_i)-f_0(t_i))^2$ where the $\{t_i\}_{i=1}^{300}$ are equidistant within $[0,1]$.

\begin{table}[H]
\centering
\begin{tabular}{ccccccccc}
\toprule
\multicolumn{3}{c}{ } & \multicolumn{2}{c}{LS} & \multicolumn{2}{c}{LAD} & \multicolumn{2}{c}{Huber} \\
\cmidrule(l{3pt}r{3pt}){4-5} \cmidrule(l{3pt}r{3pt}){6-7} \cmidrule(l{3pt}r{3pt}){8-9}
$d$ & $\beta$ & Noise & MSE & SE & MSE & SE & MSE & SE\\
\midrule
\multirow{6}{*}{$d = 1$} & \multirow{2}{*}{$\beta = 0.50$} & Gaussian & 0.3617 & 0.0101 & 0.5945 & 0.0145 & 0.3850 & 0.0099\\
 &  & $t_2$ & 2.3087 & 0.1902 & 0.8076 & 0.0216 & 0.5509 & 0.0137\\
 & \multirow{2}{*}{$\beta = 0.75$} & Gaussian & 0.2979 & 0.0121 & 0.6070 & 0.0160 & 0.3115 & 0.0099\\
 &  & $t_2$ & 3.3263 & 1.1597 & 0.7836 & 0.0207 & 0.4628 & 0.0154\\
 & \multirow{2}{*}{$\beta = 1.00$} & Gaussian & 0.2671 & 0.0096 & 0.5807 & 0.0149 & 0.2907 & 0.0104\\
 &  & $t_2$ & 1.8748 & 0.0976 & 0.7585 & 0.0233 & 0.4338 & 0.0145\\
\midrule
\multirow{6}{*}{$d = 2$} & \multirow{2}{*}{$\beta = 0.50$} & Gaussian & 1.2275 & 0.0173 & 1.4504 & 0.0165 & 1.2409 & 0.0140\\
 &  & $t_2$ & 3.1226 & 0.1074 & 1.6950 & 0.0246 & 1.4827 & 0.0202\\
 & \multirow{2}{*}{$\beta = 0.75$} & Gaussian & 1.2153 & 0.0180 & 1.4523 & 0.0174 & 1.2738 & 0.0135\\
 &  & $t_2$ & 3.2979 & 0.1204 & 1.7609 & 0.0241 & 1.6425 & 0.0215\\
 & \multirow{2}{*}{$\beta = 1.00$} & Gaussian & 1.2353 & 0.0185 & 1.5179 & 0.0178 & 1.3370 & 0.0163\\
 &  & $t_2$ & 3.2482 & 0.1211 & 1.8422 & 0.0266 & 1.7667 & 0.0213\\
\bottomrule
\end{tabular}
\caption{Mean-squared errors and associated standard errors ($\times 10$) of the estimators over 500 replications.}
\label{tab:1}
\end{table}
We construct an analogous two-dimensional model using tensor-product Fourier bases. In particular, we generate data according to
\begin{align*}
y_i = 2\sum_{j=1}^{50} \sum_{k=1}^{50} (jk)^{-2\beta} (jk)^{-2/3} \sin(2 \pi j x_{i1} ) \sin(2 \pi k x_{i2}) + \epsilon_i, \quad(i=1, \ldots, n)
\end{align*}
with $\{x_i\}_{i=1}^n \overset{iid}{\sim} \mathrm{Unif}[0,1]^2$ and  Gaussian or $t_2$ errors $\{\epsilon_i\}_{i=1}^n$. The regression function is estimated using a tensor-product Mat\'ern kernel on $[0,1]^2$ with $v=2.5$, as described in Section~\ref{sec:tensor}. We again consider $\beta \in \{0.5,0.75,1\}$ and evaluate performance with the mean-squared error $\MSE = 50^{-2} \sum_{j=1}^{50} \sum_{k=1}^{50} (\widehat{f}_n(t_j,t_k)-f_0(t_j,t_k))^2$ where the $\{t_{j}\}_{j=1}^{50}$ are equidistant within $[0,1]$. Table~\ref{tab:1} reports the results from 500 replications.

Table~\ref{tab:1} lends itself to several interesting findings the most notable of which is the deterioration in performance of the LS estimator with heavy-tailed errors. In the case of light-tailed Gaussian errors, the LS estimator outperforms the robust estimators LAD and Huber, though the latter only by a small margin.  However, under heavy-tailed $t_2$ errors, the performance of the LS estimator deteriorates nearly 10-fold and the robust estimators convincingly outperform it.  The vulnerability of the LS estimator to heavy-tailed observations can also be attested visually, as in Figures~\ref{fig:1}--\ref{fig:4}. Comparing the robust estimators in detail, it may be seen that the Huber estimator is more efficient than the LAD estimator under light-tailed Gaussian errors, the inefficiency of the latter under such errors being well-known. Under $t_2$ errors, LAD and Huber perform roughly similarly with the latter having a slight edge due to the symmetry of the $t_2$-distribution.
 
\begin{figure}[H]
\centering
\includegraphics[width=0.95\textwidth]{"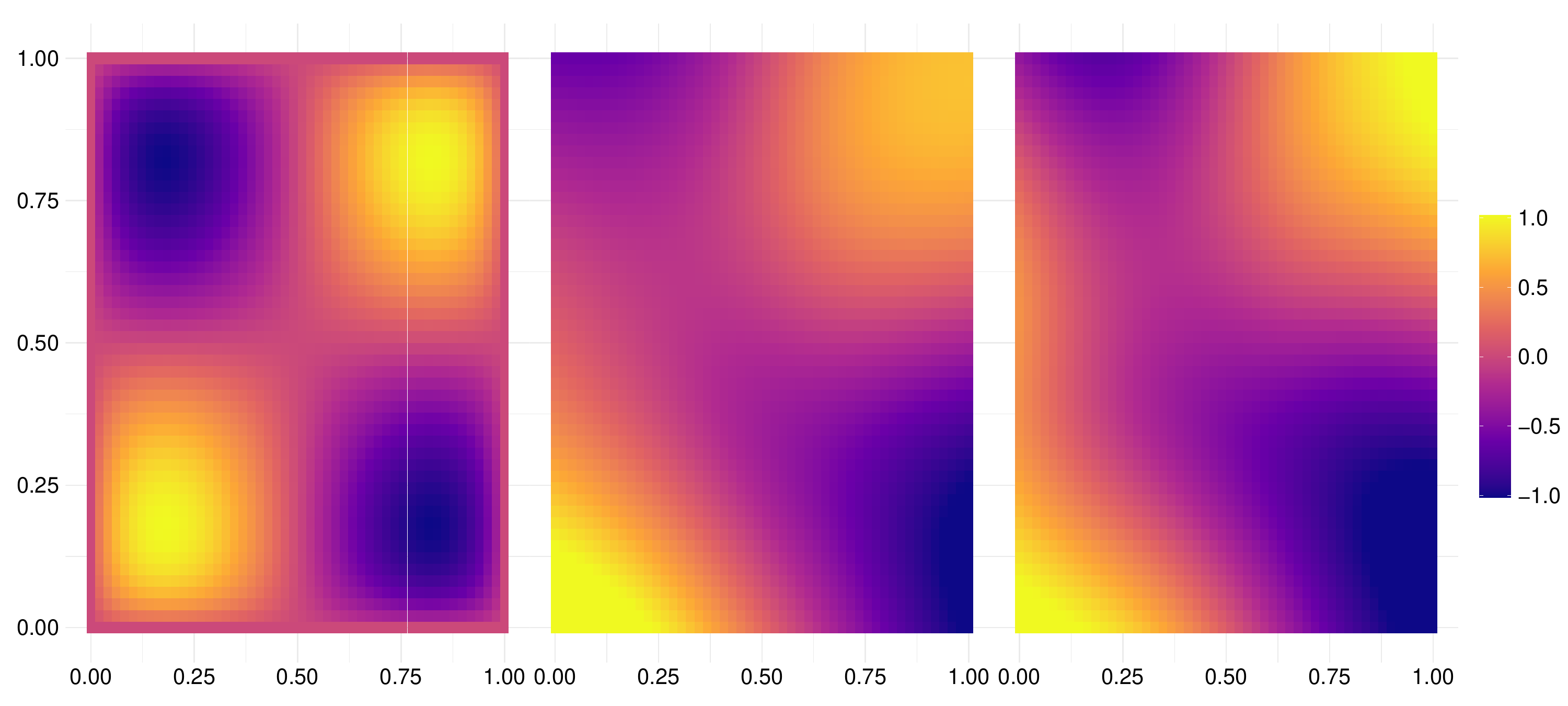"}
\caption{Contours of the two-dimensional true regression function (left) and representative least squares (middle) and least absolute deviation estimates (right) for $\beta = 0.5$ and Gaussian errors.}
\label{fig:3}
\end{figure}

As predicted by the theory, a larger $\beta$ improves performance for all estimators, but its effect is less notable for $d=2$. An explanation for this phenomenon is that, for multidimensional data and moderate sample sizes, it is the variance that is the main driver of the mean-squared error, itself an approximation of the $\mathcal{L}^2$-error, rather than the bias, which is the quantity affected by $\beta$.

\begin{figure}[H]
\centering
\includegraphics[width=0.95\textwidth]{"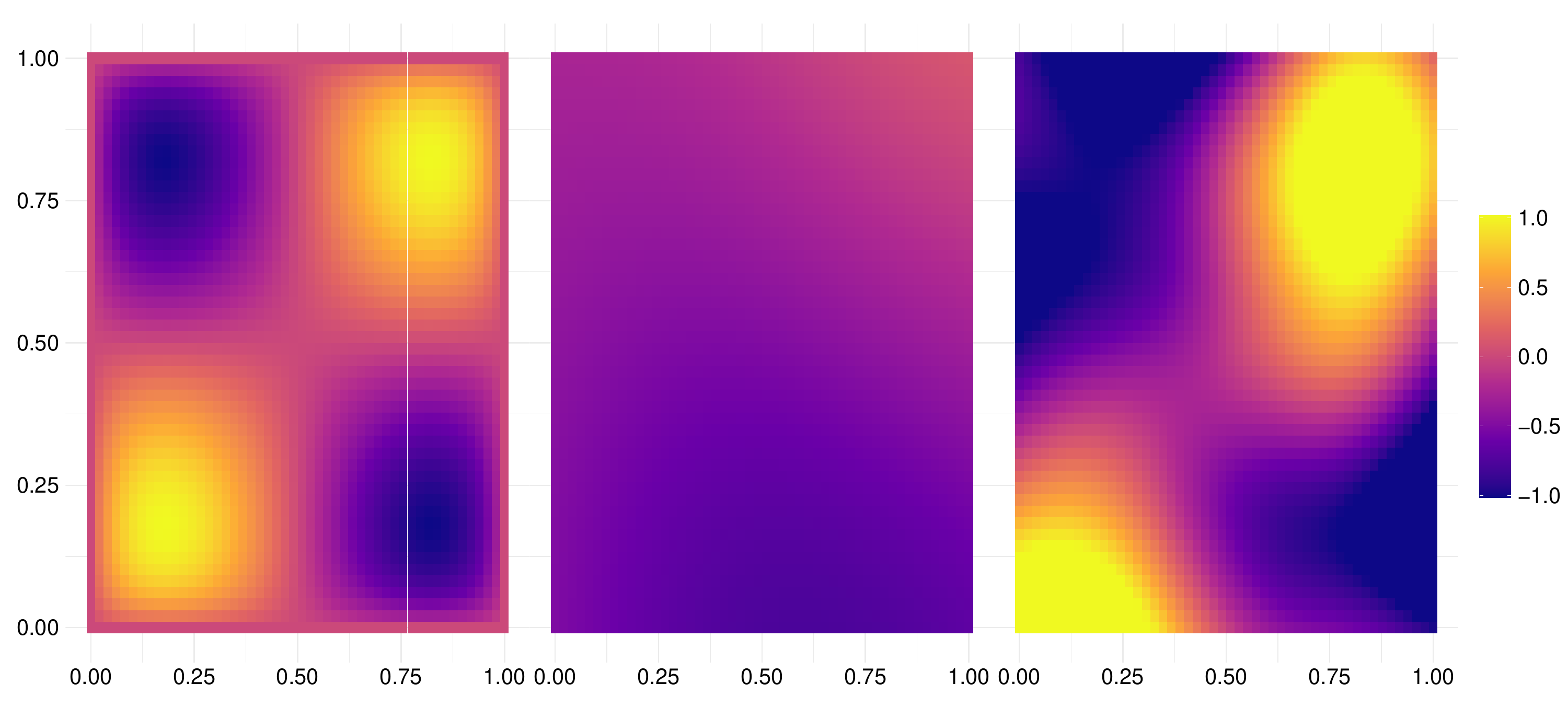"}
\caption{Contours of the two-dimensional true regression function (left) and representative least squares (middle) and least absolute deviation estimates (right) for $\beta = 0.5$ and $t_2$ errors.}
\label{fig:4}
\end{figure}

\section{Discussion}

The results of this paper provide theoretical and practical justification for the use of a broad class of regularized estimators that include the least-squares estimator as a special case. Our analysis extends and unifies existing literature through the novel concept of spectral complexity while treating loss functions that have received little attention in the literature. 

Several promising directions remain open. Most immediately, the methodology developed in this work can be extended to conditionally positive definite kernels, such as Duchon's thin-plate splines \citep{Duchon:1977}, which are widely used in scattered data approximation and spatial statistics. Another natural avenue is the application to functional and longitudinal data analysis, where one can construct novel regularized estimators for location and dispersion from discretely sampled functional data, building upon and extending existing work \citep[e.g.,][]{Kal:2023}.

\section{Appendix: Detailed proofs}

To lighten the notation in the course of our proofs we use $c_0$ to denote generic positive constants whose precise value is unimportant. Thus, the value of $c_0$ may change from appearance to appearance. We will occasionally use inequalities of the form $A \leq B$ for $A, B$ random variables defined on the same probability space $(\Omega,\mathcal{A},\mathbb{P})$. These inequalities are to be interpreted pointwise, i.e., $A(\omega) \leq B(\omega)$ for all $\omega \in \Omega$.

The proof of Theorem~\ref{thm:1} uses the abstract regularized interpolant of $f_0$, which we now recall. Let $S_K: \mathcal{L}^2_v(X) \to \mathcal{H}$ denote the integral operator 
\begin{align*}
S_K(f) (s) = \int_{X} K(s,t) f(t) v(dt), \quad f \in \mathcal{L}^2_v(X).
\end{align*}
Note that when the range of the integral operator is taken to be in $\mathcal{L}^2_v(X)$, this operator is typically denoted with $T_K$ (as we have also done in the main text). Let $S_K^{\star}: \mathcal{H} \to \mathcal{L}^2_v(X)$ denote its Hilbert adjoint, which is the inclusion operator $\mathcal{I}$, see Lemma 2.2 of \citep{St:2012} or Proposition 10.28 of \citep{Wend:2005}. Assumption~\ref{A1} ensures that the inclusion operator $\mathcal{I}$ is injective so $\ker(\mathcal{I}) = \{0\}$, which implies $\overline{\Ran(S_K)} = \ker(\mathcal{I})^{\perp} = \mathcal{H}$.

Define the operator $C_K := S_K S_K^{\star}$ mapping $\mathcal{H}$ into itself. The abstract regularized interpolant is defined as
\begin{align*}
f_{\lambda} = (C_K+\lambda)^{-1} S_K(f_0),
\end{align*}
and satisfies $f_{\lambda} \in \mathcal{H}$ even when $f_0 \notin \mathcal{H}$.
This interpolant has been studied extensively, e.g., by \citet{Smale:2005} and \citet{Fischer:2020}. The following lemma collects three properties relevant to our development; for a proof of these statements the reader is referred to Lemma 14 and Corollary 15 of \citep{Fischer:2020}.

\begin{lemma}
\label{lem:intprop}
Under assumptions \ref{A5} and \ref{A6}, for all $\lambda \in (0,1]$ the abstract regularized interpolant $f_{\lambda}$ satisfies the following properties:
\begin{enumerate}
\item  Controlled explosion in $\mathcal{H}$: $\|f_{\lambda}\|_{\mathcal{H}}^2 \leq c_0 \lambda^{-(1-\beta)}$.
\item Uniform convergence in $X$: $\|f_{\lambda}-f_0\|_{\infty}^2 \leq c_0 \lambda^{\beta-\alpha}$. 
\item Convergence in $\mathcal{L}^2_v(X)$: $\|f_{\lambda}-f_0\|_{\mathcal{L}^2_{v}(X)}^2 \leq c_0 \lambda^{\beta}$.
\end{enumerate}
\end{lemma}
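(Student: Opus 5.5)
The plan is to diagonalise everything in the Mercer basis $\{\phi_j\}_{j=1}^\infty$ of \eqref{eq:merc}, so that each of the three statements reduces to bounding a scalar filter function. By \ref{A5} we write $f_0=\sum_j a_j\mu_j^{\beta/2}\phi_j$ with $\sum_j a_j^2=\|f_0\|_\beta^2<\infty$.

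\textbf{Step 1: spectral form of $f_\lambda$.} First compute $f_\lambda$ explicitly.
\begin{itemize}
\item Since $\phi_j$ is an eigenfunction of $T_K$, we get $S_K(\phi_j)=\mu_j\phi_j$, viewed as an element of $\mathcal{H}$.
\item Hence $C_K(\sqrt{\mu_j}\phi_j)=S_K(\sqrt{\mu_j}\phi_j)=\mu_j\sqrt{\mu_j}\phi_j$. So $C_K$ is diagonal in the orthonormal basis $\{\sqrt{\mu_j}\phi_j\}$ of $\mathcal{H}$, with eigenvalues $\mu_j$.
\item For $\lambda>0$ the operator $(C_K+\lambda)^{-1}$ is therefore bounded and acts by multiplication by $(\mu_j+\lambda)^{-1}$ on each basis element.
\item Since $S_K(f_0)=\sum_j \mu_j a_j\mu_j^{\beta/2}\phi_j$, with convergence in $\mathcal{H}$ because $\sum_j \mu_j^{1+\beta}a_j^2<\infty$, we obtain
\begin{align*}
f_\lambda=\sum_{j=1}^\infty \frac{\mu_j}{\mu_j+\lambda}\,a_j\mu_j^{\beta/2}\phi_j,
\qquad
f_\lambda-f_0=-\sum_{j=1}^\infty \frac{\lambda}{\mu_j+\lambda}\,a_j\mu_j^{\beta/2}\phi_j .
\end{align*}
\end{itemize}

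\textbf{Step 2: parts 1 and 3.} These follow from the norm formulas of Section~\ref{sec:kernel}.
\begin{itemize}
\item For part 1,
\[
\|f_\lambda\|_{\mathcal{H}}^2=\sum_j a_j^2\,\frac{\mu_j^{1+\beta}}{(\mu_j+\lambda)^2}\le \sup_{t>0}\frac{t^{1+\beta}}{(t+\lambda)^2}\,\|f_0\|_\beta^2 .
\]
Using $t^{1+\beta}\le (t+\lambda)^{1+\beta}$ gives $t^{1+\beta}/(t+\lambda)^2\le (t+\lambda)^{\beta-1}\le\lambda^{\beta-1}$, because $\beta\le1$.
\item For part 3,
\[
\|f_\lambda-f_0\|_{\mathcal{L}^2_v(X)}^2=\sum_j a_j^2\,\frac{\lambda^2\mu_j^\beta}{(\mu_j+\lambda)^2}\le \sup_{t>0}\frac{\lambda^2t^\beta}{(t+\lambda)^2}\,\|f_0\|_\beta^2\le \lambda^\beta\|f_0\|_\beta^2 .
\]
The last inequality holds because $\lambda^{2-\beta}t^\beta\le(t+\lambda)^2$, which follows from $\lambda^{2-\beta}t^{\beta}\le(t+\lambda)^{2-\beta}(t+\lambda)^\beta$.
\end{itemize}

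\textbf{Step 3: part 2.} Here we use \ref{A6}, which gives $\|g\|_{\mathcal{L}^\infty_v}\le c_0\|g\|_\alpha$ for $g\in[\mathcal{H}]^\alpha$.
\begin{itemize}
\item Writing $f_\lambda-f_0=\sum_j b_j\mu_j^{\alpha/2}\phi_j$ with $b_j=-\lambda a_j\mu_j^{(\beta-\alpha)/2}/(\mu_j+\lambda)$, we have
\[
\|f_\lambda-f_0\|_\alpha^2=\sum_j a_j^2\,\frac{\lambda^2\mu_j^{\beta-\alpha}}{(\mu_j+\lambda)^2}\le\lambda^{\beta-\alpha}\|f_0\|_\beta^2 .
\]
This is the same scalar bound as in Step 2, with $\beta-\alpha\in(0,1)$ in place of $\beta$.
\item In particular $f_\lambda-f_0\in[\mathcal{H}]^\alpha$, since the coefficients are square summable. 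The embedding then yields the claim.
\end{itemize}

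\textbf{Main obstacle.} The delicate points are interpretive rather than computational.
\begin{itemize}
\item One must check rigorously that the formula $(C_K+\lambda)^{-1}$ acting on the basis is legitimate. This requires $S_K(f_0)\in\mathcal{H}$ and termwise application of a bounded operator.
\item One must also note that $\|\cdot\|_\infty$ in part 2 means the $v$-essential supremum. $f_0$ is only an $\mathcal{L}^2_v$-class, so only the essential supremum is meaningful. If $f_0$ has a continuous version, it coincides with the sup over $X$ because $v$ has full support by \ref{A1}.
\item The restriction $\lambda\le1$ is not actually needed in these bounds.
\end{itemize}
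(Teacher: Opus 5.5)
Your proof is correct. The paper gives no argument of its own for this lemma but defers to Lemma 14 and Corollary 15 of \citep{Fischer:2020}, and those results are proved by exactly the spectral filter computation you carry out: diagonalise $(C_K+\lambda)^{-1}$ in $\{\sqrt{\mu_j}\phi_j\}$, bound $\sup_{t>0}\lambda^2 t^{s}/(t+\lambda)^2\le\lambda^{s}$ for $s\in[0,2]$, and invoke the embedding in \ref{A6} for the sup-norm part. Your write-up additionally makes the constants explicit ($\|f_0\|_\beta^2$, and the squared embedding constant times $\|f_0\|_\beta^2$). It also correctly notes that $\|\cdot\|_\infty$ must be read as the $v$-essential supremum, which is all that is needed where the bound is applied at the sample points $x_i$.
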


We now present the proof of Theorem~\ref{thm:1}.

\begin{proof}[Proof of Theorem~\ref{thm:1}] 
Write $R_i =  f_0(x_i) - f_{\lambda}(x_i)$ for $i=1, \ldots, n$ and let $L_n$ denote the objective function, that is,
\begin{align*}
L_n(f)  = \frac{1}{n} \sum_{i=1}^n \rho\left(y_{i}-f(x_i) \right) + \lambda \|f\|_{\mathcal{H}}^2, \quad f \in \mathcal{H}.
\end{align*}
Let $C_n = \mathcal{N}_{\infty}(\lambda)/n+ \lambda^{\beta}$. It will be shown that
\begin{align}
\label{eq:objlimit}
\lim_{D \to \infty} \liminf_{n \to \infty} \Pr\left(\inf_{\|f\|_{\mathcal{H},\lambda}=D} L_n(f_{\lambda}+C_n^{1/2}f)> L_n(f_\lambda) \right) = 1.
\end{align}
The convexity of $\rho$ from assumption \ref{A2} and the strict convexity $f \mapsto \|f\|_{\mathcal{H}}^2$ in combination with \eqref{eq:objlimit} guarantee the existence of a unique minimizer $\widehat{f}_n$ such that $\|\widehat{f}_n-f_0\|_{\mathcal{H},\lambda} = O_{\mathbb{P}}(C_n^{1/2})$ , see, e.g., \citep[Lemma 1]{Kal:2022}. By definition of the norm $\|\cdot\|_{\mathcal{H},\lambda}$, this implies
\begin{align*}
\|\widehat{f}_n-f_\lambda\|_{\mathcal{L}^2_v(X)} \leq \|\widehat{f}_n-f_\lambda\|_{\mathcal{H},\lambda} = O_{\mathbb{P}}(C_n^{1/2}).
\end{align*}
Minkowski's inequality now gives
\begin{align*}
\|\widehat{f}_n-f_0\|_{\mathcal{L}^2_v(X)}\leq \|\widehat{f}_n-f_\lambda\|_{\mathcal{L}^2_v(X)} + \|f_\lambda- f_0 \|_{\mathcal{L}^2_v(X)} \leq  O_{\mathbb{P}}(C_n^{1/2}) + O(\lambda^{\beta/2}) = O_{\mathbb{P}}(C_n^{1/2}),
\end{align*}
where the last equality is due to the definition of $C_n$. From this it follows that in order to prove the result, we need to establish \eqref{eq:objlimit}. We establish this critical result by applying \ref{A3} and the fundamental theorem of calculus for Lebesgue integrals to decompose
\begin{align*}
L_n(f_{\lambda}	+C_n^{1/2}f) - L_n(f_{\lambda}) & = \frac{1}{n} \sum_{i=1}^n \int_{R_i}^{R_i-C_n^{1/2}f(x_i)} \left\{ \psi\left(\epsilon_i+t \right) - \psi \left( \epsilon_i \right) \right\} d t 
\\ & \quad + \frac{C_n^{1/2}}{n} \sum_{i=1}^n \psi\left(\epsilon_i \right) f(x_i) + \lambda C_n \|f\|_{\mathcal{H}}^2 +  2 \lambda C_n^{1/2} \langle f, f_\lambda \rangle_{\mathcal{H}}
\\ & = I_1(f) + I_2(f) + I_3(f),
\end{align*}
where
\begin{align*}
I_1(f) &:= \frac{1}{n} \sum_{i=1}^n \int_{R_i}^{R_i-C_n^{1/2}f(x_i)} \left\{ \psi\left(\epsilon_i+t \right) - \psi \left( \epsilon_i \right) \right\} d t  + \lambda C_n \|f\|_{\mathcal{H}}^2 \\
I_2(f) &: = \frac{C_n^{1/2}}{n} \sum_{i=1}^n \psi(\epsilon_i) f(x_i) \\
I_3(f) &:= 2 \lambda C_n^{1/2} \langle f, f_\lambda \rangle_{\mathcal{H}}. 
\end{align*}
We will show that there exists a strictly positive $c_0$ with the property that
\begin{align}
\inf_{\|f\|_{\mathcal{H},\lambda}=D} \mathbb{E}[I_1(f)] & \geq c_0 D^2 C_n+ O(1)  DC_n  \label{eq:I1} \\
\sup_{\|f\|_{\mathcal{H},\lambda} \leq D} |I_1(f) - \mathbb{E}[I_1(f)]| & = O_{\mathbb{P}}(1) D C_n  \label{eq:I1md}\\
\sup_{\|f\|_{\mathcal{H},\lambda} \leq D} |I_2(f)| & = O_{\mathbb{P}}(1)D C_n  \label{eq:I2}
\\ \sup_{\|f\|_{\mathcal{H},\lambda} \leq D} |I_3(f)| & = O(1) D C_n \label{eq:I3}. 
\end{align}
If \eqref{eq:I1}--\eqref{eq:I3} were to hold, then \eqref{eq:objlimit} would follow, as for every $\epsilon>0$ we would be able to find a large enough $D = D_{\epsilon} > 1$ (so that the quadratic term dominates) and let $n \to \infty$ to obtain
\begin{align*}
\liminf_{n \to \infty} \Pr\left(\inf_{\|f\|_{\mathcal{H},\lambda}=D} L_n(f_{\lambda}+C_n^{1/2}f)> L_n(f_\lambda) \right) \geq 1-\epsilon.
\end{align*}
As this holds for every $\epsilon>0$, the double limit in \eqref{eq:objlimit} may  be deduced. For improved clarity, assertions  \eqref{eq:I1}--\eqref{eq:I3} are proven separately in Lemmas \ref{lem:I1}--\ref{lem:I3} below.
\end{proof}

\begin{lemma}
\label{lem:I1}
Under the conditions of Theorem~\ref{thm:1}, there exists a strictly positive $c_0$ such that
\begin{align*}
\inf_{\|f\|_{\mathcal{H},\lambda}=D}\mathbb{E}[I_1(f)] & \geq c_0 D^2 C_n+ O(1) D C_n,
\end{align*}
as $n \to \infty$, i.e., \eqref{eq:I1} holds.
\end{lemma}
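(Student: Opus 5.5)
Conditioning on the design and using the independence of $\epsilon_i$ and $x_i$ from \ref{A4}, I would write, with $g(t)=\mathbb{E}[\psi(\epsilon_1+t)]$ and $g(0)=0$,
\begin{align*}
\mathbb{E}[I_1(f)] = \int_X \int_{R(x)}^{R(x)-C_n^{1/2}f(x)} g(t)\, dt\, v(dx) + \lambda C_n\|f\|_{\mathcal{H}}^2,
\end{align*}
where $R=f_0-f_\lambda$. Exchanging the expectation and the integral requires only $\mathbb{E}|\psi(\epsilon_1+t)|<\infty$ locally uniformly in $t$, which follows from \ref{A3} together with $\mathbb{E}|\psi(\epsilon_1)|^2<\infty$. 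The plan is to replace $g(t)$ by $\gamma t$ and compute explicitly:
\begin{align*}
\int_{R}^{R-C_n^{1/2}f}\gamma t\,dt = \frac{\gamma}{2}\left(C_nf^2 - 2C_n^{1/2}Rf\right).
\end{align*}
After integrating against $v$, this gives $\frac{\gamma}{2}C_n\|f\|^2_{\mathcal{L}^2_v}-\gamma C_n^{1/2}\langle R,f\rangle_{\mathcal{L}^2_v}$. Adding $\lambda C_n\|f\|^2_{\mathcal{H}}$, we get at least $\min(\gamma/2,1)C_n\|f\|^2_{\mathcal{H},\lambda}=c_0D^2C_n$. The cross term is bounded by Cauchy--Schwarz and Lemma~\ref{lem:intprop}(3): $|C_n^{1/2}\langle R,f\rangle|\le C_n^{1/2}c_0\lambda^{\beta/2}D\le c_0DC_n$, since $\lambda^{\beta/2}\le C_n^{1/2}$. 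This supplies the $O(1)DC_n$ term.

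The main obstacle is the remainder $g(t)-\gamma t=o(t)$, which is only a local statement, so I need the integration range to shrink uniformly in $x$. On $\|f\|_{\mathcal{H},\lambda}=D$, Definition~\ref{def:1} gives
\begin{align*}
\|C_n^{1/2}f\|_\infty\le D\,(C_n\mathcal{N}_\infty(\lambda))^{1/2},
\end{align*}
and $C_n\mathcal{N}_\infty(\lambda)=\mathcal{N}_\infty^2(\lambda)/n+\lambda^\beta\mathcal{N}_\infty(\lambda)\to0$ by the hypotheses of Theorem~\ref{thm:1}. This is exactly where uniform control through $\mathcal{N}_\infty$, and not $\mathcal{N}$, is needed. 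In addition, Lemma~\ref{lem:intprop}(2) with \ref{A6} ($\alpha<\beta$) gives $\|R\|_\infty\to0$; when $\beta=1$, the same conclusion follows from $\|R\|_\infty^2\le\mathcal{N}_\infty\|R\|^2_{\mathcal{H},\lambda}$. Hence $\delta_n:=\|R\|_\infty+D(C_n\mathcal{N}_\infty)^{1/2}\to0$, and for any $\eta>0$ we eventually have $|g(t)-\gamma t|\le\eta|t|$ for $|t|\le\delta_n$.

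The remainder is then at most
\begin{align*}
\eta\int_X\left|\int_R^{R-C_n^{1/2}f}|t|\,dt\right|dv\le\eta\int_X \left(C_nf^2+2C_n^{1/2}|R||f|\right)dv\le \eta C_nD^2+c_0\eta DC_n,
\end{align*}
using the same Cauchy--Schwarz bound as before. Choosing $\eta<\min(\gamma/2,1)/2$ absorbs this into the main quadratic term and the $O(1)DC_n$ term. Since $D$ is fixed inside $\liminf_n$ in \eqref{eq:objlimit}, the dependence of $\delta_n$ on $D$ causes no problem, and the bound holds uniformly over the sphere $\|f\|_{\mathcal{H},\lambda}=D$.
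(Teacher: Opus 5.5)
Your proposal is correct and takes essentially the same route as the paper. You condition on the design and replace $\mathbb{E}[\psi(\epsilon_1+t)]$ by $\gamma t$ on an integration range that shrinks uniformly, because $C_n\mathcal{N}_\infty(\lambda)\to 0$ and $\|f_0-f_\lambda\|_\infty\to 0$, and you control the cross term by Cauchy--Schwarz with $\|f_\lambda-f_0\|^2_{\mathcal{L}^2_v(X)}\le c_0\lambda^\beta\le c_0C_n$. Your explicit $\eta$-absorption of the $o(t)$ remainder, the justification of the expectation interchange via \ref{A3}, and the remark that $\beta=1$ needs no \ref{A6} simply make rigorous what the paper compresses into its $(1+o(1))$ factors.
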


\begin{proof}

Define the $D$-ball in $\mathcal{H}$: $\mathcal{B}_{D} := \{f \in \mathcal{H}: \|f\|_{\mathcal{H},\lambda} \leq D\}$. We begin by observing that, by Proposition~\ref{prop:2}, the reproducing kernel $K_{\lambda}$ satisfies
\begin{align*}
\sup_{x \in X}\|K_{\lambda}(x, \cdot)\|_{\mathcal{H},\lambda}^2 = \sup_{x \in X} K_{\lambda}(x,x)  = \mathcal{N}_{\infty}(\lambda).
\end{align*}
Our limit assumptions ensure that $\mathcal{N}_{\infty}(\lambda) C_n = \mathcal{N}_{\infty}(\lambda)( \mathcal{N}_{\infty}(\lambda)/n + \lambda^{\beta}) \to 0$, as $n \to \infty$. Hence, for every $f \in \mathcal{B}_D$, using the reproducing property and the Schwarz inequality,
\begin{align}
\label{eq:emb}
C_n^{1/2} \max_{1\leq i \leq n} |f(x_i)| \leq D C_n^{1/2} \sup_{x \in X} \|K_{\lambda}(x,\cdot)\|_{\mathcal{H}, \lambda} = D C_n^{1/2} \sqrt{\mathcal{N}_{\infty}(\lambda)} \to 0,
\end{align}
uniformly in $\mathcal{B}_D$. At the same time, appealing to Lemma~\ref{lem:intprop}, we have
\begin{align}
\label{eq:rem}
\max_{1 \leq i \leq n} |R_i| \leq \sup_{x \in X} |f_0(x) - f_{\lambda}(x)| \leq c_0 \lambda^{\beta-\alpha} \to 0,
\end{align}
since $\alpha<\beta$ and $\lambda \to 0$. From \eqref{eq:emb} and \eqref{eq:rem} it follows that the domain of integration shrinks to zero. Taking expectations with respect to the errors $\{\epsilon_i\}_{i=1}^n$, that is, conditionally on $\{x_i\}_{i=1}^n$, and using their independence contained in assumption \ref{A4}, we find
\begin{align*}
\mathbb{E}_{\epsilon} \left[ \frac{1}{n} \sum_{i=1}^n \int_{R_i}^{R_i-C_n^{1/2}f(x_i)} \left\{ \psi\left(\epsilon_i+t \right) - \psi \left( \epsilon_i \right) \right\} d t \right] & = \frac{1}{n} \sum_{i=1}^n \int_{R_i}^{R_i-C_n^{1/2}f(x_i)} \mathbb{E}[\left\{ \psi\left(\epsilon_1+t \right) - \psi \left( \epsilon_1 \right) \right\}] d t 
\\ & = \frac{1}{n} \sum_{i=1}^n \int_{R_i}^{R_i-C_n^{1/2}f(x_i)} \{\gamma t + o(t) \} d t,
\end{align*}
where to obtain this equality we have used \ref{A4} and the shrinking limits of integration. Evaluating this integral, we find that this may be bounded from below by
\begin{align*}
\frac{\gamma C_n }{2 n} \sum_{i=1}^n |f(x_i)|^2(1+o(1))  - \frac{\gamma C_n^{1/2}}{n} \sum_{i=1}^n |f(x_i)| |R_i| (1+o(1)).
\end{align*}
The $o(1)$-term is non-random and does not depend on $\{x_i\}_{i=1}^n$, by the uniformity in \eqref{eq:emb} and \eqref{eq:rem}. Taking expectations with respect to $\{x_i\}_{i=1}^n$ using the law of iterated expectations, we obtain
\begin{align*}
\mathbb{E}[I_1(f)] & \geq \frac{\gamma}{2} C_n \int_{X} |f(t)|^2 v(d t) (1+o(1)) - \gamma C_n^{1/2} \int_{X} |f(t)| |f_{\lambda}(t)-f_0(t)| v(d t) (1+o(1)) + \lambda C_n \|f\|_{\mathcal{H}}^2.
\end{align*}
Applying the Schwarz inequality on the second integral along with Lemma~\ref{lem:intprop}, we find $\int_{X} |f(t)| |f_{\lambda}(t)-f_0(t)| v(dt) \leq \|f\|_{\mathcal{L}^2_v(X)} \|f_{\lambda} - f_0 \|_{\mathcal{L}^2_v(X)} \leq C_n^{1/2} \|f\|_{\mathcal{L}^2_v(X)}$, as $\|f_{\lambda} - f_0 \|_{\mathcal{L}^2_v(X)} = O(\lambda^{\beta/2}) = O(C_n^{1/2})$. Moreover,  for all large $n$, $1/2 \leq 1+o(1) \leq 3/2$, hence
\begin{align*}
\mathbb{E}[I_1(f)] & \geq  \frac{\gamma}{4} C_n \|f\|_{\mathcal{L}^2_v(X)}^2 -  \frac{3\gamma}{2} C_n \|f\|_{\mathcal{L}^2_v(X)} + \lambda C_n \|f\|_{\mathcal{H}}^2 \geq \min\left(\frac{\gamma}{4}, 1 \right) C_n \|f\|_{\mathcal{H},\lambda}^2 - \frac{3 \gamma}{2} C_n \|f\|_{\mathcal{H},\lambda},
\end{align*}
because, by definition, $\|f\|_{\mathcal{H},\lambda}^2 = \|f\|_{\mathcal{L}^2_v(X)}^2 + \lambda \|f\|_{\mathcal{H}}^2 \geq \|f\|_{\mathcal{L}^2_v(X)}^2$. Taking the infimum, 
\begin{align*}
\inf_{\|f\|_{\mathcal{H},\lambda} = D}\mathbb{E}[I_1(f)] \geq c_0 C_n D^2 -c_1  C_n D,
\end{align*}
for all large $n$ with $c_0 = \min(\gamma/4, 1)$ and $c_1 = 3\gamma/2$. The proof is complete.

\end{proof}

\begin{lemma}
\label{lem:I1md}
Under the conditions of Theorem~\ref{thm:1}, 
\begin{align*}
\sup_{\|f\|_{\mathcal{H},\lambda} \leq D} |I_1(f) - \mathbb{E}[I_1(f)]| & = O_{\mathbb{P}}(1) D C_n,
\end{align*}
i.e., \eqref{eq:I1md} holds.
\end{lemma}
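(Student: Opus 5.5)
The deterministic term $\lambda C_n\|f\|_{\mathcal{H}}^2$ in $I_1(f)$ cancels in $I_1(f)-\mathbb{E}[I_1(f)]$, so only the empirical process
\begin{align*}
Z_n(f) := \frac{1}{n}\sum_{i=1}^n \left\{ W_i(f) - \mathbb{E}[W_i(f)] \right\}, \qquad W_i(f) := \int_{R_i}^{R_i-C_n^{1/2}f(x_i)} \left\{\psi(\epsilon_i+t)-\psi(\epsilon_i)\right\} dt,
\end{align*}
indexed by $f\in\mathcal{B}_D$, remains. The plan is to bound $\mathbb{E}\sup_{f\in\mathcal{B}_D}|Z_n(f)|$ and conclude with Markov's inequality.

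First, symmetrise. With i.i.d.\ Rademacher signs $\sigma_i$ independent of the data,
\begin{align*}
\mathbb{E}\sup_{\mathcal{B}_D}|Z_n(f)| \leq 2\,\mathbb{E}\sup_{\mathcal{B}_D}\Big|\frac{1}{n}\sum_{i=1}^n\sigma_i W_i(f)\Big|.
\end{align*}
Measurability of the supremum is not an issue: $\mathcal{B}_D$ is separable and $f\mapsto W_i(f)$ is continuous, because $f(x_i)$ depends continuously on $f$ by the reproducing property.

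Next, contract. Conditionally on $(x_i,\epsilon_i)$, write $W_i(f)=h_i(C_n^{1/2}f(x_i))$ with $h_i(s)=\int_{R_i}^{R_i-s}\{\psi(\epsilon_i+t)-\psi(\epsilon_i)\}dt$, so $h_i(0)=0$. By \eqref{eq:emb} and \eqref{eq:rem}, for large $n$ all arguments lie in $|s|\le DC_n^{1/2}\mathcal{N}_\infty^{1/2}(\lambda)\to0$, and every relevant $t$ satisfies $|t|\le\kappa$. Assumption \ref{A3} then gives $|\psi(\epsilon_i+t)-\psi(\epsilon_i)|\le M_1$, so $h_i$ is $M_1$-Lipschitz on the relevant range. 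Since \ref{A3} bounds only increments of size at most $\kappa$, this is exactly where the linearisation remainder, rather than the raw loss, is contracted. The contraction principle of Ledoux–Talagrand (Theorem 4.12) yields
\begin{align*}
\mathbb{E}\sup_{\mathcal{B}_D}\Big|\frac{1}{n}\sum_{i}\sigma_i W_i(f)\Big| \leq 2M_1 C_n^{1/2}\,\mathbb{E}\sup_{\mathcal{B}_D}\Big|\frac{1}{n}\sum_i \sigma_i f(x_i)\Big|.
\end{align*}

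Then bound the Rademacher complexity of the ball. Writing $f(x_i)=\langle f,K_\lambda(x_i,\cdot)\rangle_{\mathcal{H},\lambda}$ and applying Cauchy–Schwarz and Jensen,
\begin{align*}
\mathbb{E}\sup_{\mathcal{B}_D}\Big|\frac{1}{n}\sum_i\sigma_i f(x_i)\Big| \leq \frac{D}{n}\Big(\mathbb{E}\sum_i K_\lambda(x_i,x_i)\Big)^{1/2} \leq D\sqrt{\mathcal{N}_\infty(\lambda)/n}.
\end{align*}
The effective dimension $\mathcal{N}(\lambda)\le\mathcal{N}_\infty(\lambda)$ could even replace $\mathcal{N}_\infty(\lambda)$ here. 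Combining the three steps, $\mathbb{E}\sup|Z_n|\le c_0 D C_n^{1/2}(\mathcal{N}_\infty(\lambda)/n)^{1/2}\le c_0 DC_n$, because $\mathcal{N}_\infty(\lambda)/n\le C_n$. Markov's inequality then gives \eqref{eq:I1md}.

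The main obstacle is the contraction step. The maps $h_i$ are random, and they are Lipschitz only on a shrinking neighbourhood, so the bound holds only on the event that arguments are small. Here that event is deterministic for large $n$, since \eqref{eq:emb} and \eqref{eq:rem} hold uniformly. To stay safe, define $\tilde h_i$ as the Lipschitz extension of $h_i$ outside the range, which agrees with $h_i$ for large $n$. Contraction is applied conditionally on $(x_i,\epsilon_i)$, which is legitimate because the $\sigma_i$ are independent of them.
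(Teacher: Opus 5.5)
Your proposal is correct and follows the paper's proof essentially step for step. Both arguments use symmetrisation, then the Ledoux--Talagrand contraction (Theorem 4.12) on the linearisation remainder with Lipschitz constant $M_1C_n^{1/2}$ from \ref{A3} on the shrinking integration range, then the bound $D\sqrt{\mathcal{N}_\infty(\lambda)/n}$ on the Rademacher complexity of $\mathcal{B}_D$ via $K_\lambda$ and Jensen, and finally Markov's inequality. Your additions (the factor $2$ in the absolute-value contraction, the Lipschitz extension outside the relevant range, and the separability argument for measurability of the supremum) are harmless refinements of points the paper passes over.
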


\begin{proof}
The proof is by symmetrization and contraction. Write
\begin{align*}
I_1(f) - \mathbb{E}[I_1(f)] & = \frac{1}{n} \sum_{i=1}^n \left\{  \int_{R_i}^{R_i-C_n^{1/2}f(x_i)} \left\{ \psi\left(\epsilon_i+t \right) - \psi \left( \epsilon_i \right) \right\} d t - \right. 
\\ & \left.  \quad -  \mathbb{E} \left[ \int_{R_i}^{R_i-C_n^{1/2}f(x_i)}  \left\{ \psi\left(\epsilon_i+t \right) - \psi \left( \epsilon_i \right) \right\} d t \right] \right\}
\\ & = \frac{1}{n} \sum_{i=1}^n \left\{ U_i(f(x_i)) - \mathbb{E}[U_i(f(x_i))]\right\},
\end{align*}
with $U_i(f(x_i)) = \int_{R_i}^{R_i-C_n^{1/2}f(x_i)} \{\psi(\epsilon_i+t)-\psi(\epsilon_i)\} d t$. These processes are independent and identically distributed for each given $n$. By symmetrization \citep[Lemma 2.3.1]{VDV:1996},
\begin{align*}
\mathbb{E}\left[ \sup_{\|f\|_{\mathcal{H}, \lambda} \leq D} \left|I_1(f) - \mathbb{E}[I_1(f)] \right|  \right] \leq  2 \mathbb{E}\left[\sup_{\|f\|_{\mathcal{H}, \lambda}  \leq D}\left| \frac{1}{n} \sum_{i=1}^n \xi_i U_{i}(f(x_i)) \right| \right],
\end{align*}
where $\{\xi_i\}_{i=1}^n$ are i.i.d. Rademacher random variables taking values $\{-1,1\}$ with probability $1/2$ each and are also independent of $(x_i, \epsilon_i)$. It is clear that $U_i(0) = 0$ for all $i=1, \ldots, n$. We show that the function $f \mapsto U_i(f(x_i))$ is Lipschitz with Lipschitz constant $M_1 C_n^{1/2}$. Indeed, recalling that, by the proof of Lemma~\ref{lem:I1}, in particular, \eqref{eq:emb} and \eqref{eq:rem}, the domain of integration shrinks to zero, assumption \ref{A3} may be applied to obtain
\begin{align*}
\left|U_i(f_1(x_i)) - U_i(f_2(x_i)) \right| & = \left| \int_{R_i-C_n^{1/2} f_1(x_i)}^{R_i-C_n^{1/2}f_2(x_i)} \{\psi(\epsilon_i+t) - \psi(\epsilon_i)\}  d t \right|  \leq  M_1  C_n^{1/2}|f_1(x_i)-f_2(x_i)|,
\end{align*}
as claimed. Besides, by the definition of $\mathcal{N}_{\infty}(\lambda)$ and homogeneity, $\sup_{\|f\|_{\mathcal{H},\lambda}\leq D} \|f\|_{\infty} = D \sqrt{\mathcal{N}_{\infty}(\lambda)} < \infty$. Conditionally on $\{(\epsilon_i,x_i)\}_{i=1}^n$, the conditions of Theorem 4.12 of \citep{Tal:1991} are satisfied and so, by the contraction inequality given there,  valid for all $n \in \mathbbm{N}$,
\begin{align*}
\mathbb{E}\left[\sup_{\|f\|_{\mathcal{H}, \lambda} \leq D}\left| \frac{1}{n} \sum_{i=1}^n \xi_i U_{i}(f(x_i)) \right| \right] \leq M_1 C_n^{1/2} \mathbb{E}\left[\sup_{\|f\|_{\mathcal{H}, \lambda} \leq D}\left| \frac{1}{n} \sum_{i=1}^n \xi_i f(x_i) \right| \right]. 
\end{align*}
To complete the proof we bound the expectation appearing on the right side of this inequality. 

By the reproducing property, the linearity of the inner product and the definition of the dual norm on $\mathcal{H}$, we have
\begin{align*}
\mathbb{E}\left[\sup_{\|f\|_{\mathcal{H}, \lambda} \leq D}\left| \frac{1}{n} \sum_{i=1}^n \xi_i f(x_i) \right| \right] & = \mathbb{E}\left[\sup_{\|f\|_{\mathcal{H}, \lambda} \leq D}\left| \frac{1}{n} \sum_{i=1}^n \xi_i \langle f, K_{\lambda}(x_i,\cdot) \rangle_{\mathcal{H},\lambda}\right| \right]
\\ & =  D \mathbb{E}\left[ \left\| \frac{1}{n} \sum_{i=1}^n \xi_i K_{\lambda}(x_i, \cdot) \right\|_{\mathcal{H},\lambda} \right].
\end{align*}
Using the inequality $\mathbb{E}[|X|] \leq \sqrt{\mathbb{E}[X^2]}$ and the independence of $\{\xi_i\}_{i=1}^n$ from $\{x_i\}_{i=1}^n$, we obtain
\begin{align*}
\mathbb{E}\left[\sup_{\|f\|_{\mathcal{H}, \lambda} \leq D}\left| \frac{1}{n} \sum_{i=1}^n \xi_i f(x_i) \right| \right] & \leq D \sqrt{ \mathbb{E}\left[ \left\| \frac{1}{n} \sum_{i=1}^n \xi_i K_{\lambda}(x_i, \cdot) \right\|_{\mathcal{H},\lambda}^2 \right] }=  \frac{D}{n}  \sqrt{ \sum_{i=1}^n \sum_{j=1}^n \mathbb{E}[K_{\lambda}(x_i,x_j)] \mathbb{E}\left[\xi_i \xi_j \right] }
\\ & = \frac{D}{\sqrt{n}} \sqrt{ \int_{X} K_{\lambda}(x,x) v(dx)  } \leq \frac{D}{\sqrt{n}} \sqrt{ v(X) \sup_{x \in X} K_{\lambda}(x,x) } = D O\left( \sqrt{ \frac{\mathcal{N}_{\infty}(\lambda)}{n} }\right).
\end{align*}
But, by definition of $C_n$, $\mathcal{N}_{\infty}(\lambda)/n \leq C_n$, therefore
\begin{align*}
\mathbb{E}\left[ \sup_{\|f\|_{\mathcal{H},\lambda} \leq D} |I_1(f) - \mathbb{E}[I_1(f)]| \right] = O(1) D C_n,
\end{align*}
and the proof is completed by an appeal to Markov's inequality.
\end{proof}

\begin{lemma}
\label{lem:I2}
Under the conditions of Theorem~\ref{thm:1},
\begin{align*}
\sup_{\|f\|_{\mathcal{H},\lambda} \leq D} |I_2(f)| & = O_{\mathbb{P}}(1)D C_n,
\end{align*}
i.e., \eqref{eq:I2} holds.
\end{lemma}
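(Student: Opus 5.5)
We bound the expected supremum directly. Since $I_2$ is linear in $f$, we reduce it to the norm of a single random element of $\mathcal{H}$. By the reproducing property for $\langle \cdot,\cdot\rangle_{\mathcal{H},\lambda}$, for every $f \in \mathcal{H}$ we have
\begin{align*}
I_2(f) = C_n^{1/2} \left\langle f, \frac{1}{n}\sum_{i=1}^n \psi(\epsilon_i) K_{\lambda}(x_i,\cdot) \right\rangle_{\mathcal{H},\lambda}.
\end{align*}
Taking the supremum over $\|f\|_{\mathcal{H},\lambda} \leq D$ and using duality, exactly as in the proof of Lemma~\ref{lem:I1md}, gives
\begin{align*}
\sup_{\|f\|_{\mathcal{H},\lambda}\leq D}|I_2(f)| = D C_n^{1/2} \left\| \frac{1}{n}\sum_{i=1}^n \psi(\epsilon_i) K_{\lambda}(x_i,\cdot)\right\|_{\mathcal{H},\lambda}.
\end{align*}
No symmetrization or contraction is needed here: the weights $\psi(\epsilon_i)$ already play the role of the Rademacher variables.

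Next we bound the second moment of this norm. Expanding the square and using the reproducing identity $\langle K_\lambda(x_i,\cdot),K_\lambda(x_j,\cdot)\rangle_{\mathcal{H},\lambda} = K_\lambda(x_i,x_j)$ gives
\begin{align*}
\mathbb{E}\left\| \frac{1}{n}\sum_{i=1}^n \psi(\epsilon_i) K_{\lambda}(x_i,\cdot)\right\|_{\mathcal{H},\lambda}^2 = \frac{1}{n^2}\sum_{i,j} \mathbb{E}[\psi(\epsilon_i)\psi(\epsilon_j)K_\lambda(x_i,x_j)].
\end{align*}
By \ref{A4}, the errors are independent of the predictors and of each other and satisfy $\mathbb{E}[\psi(\epsilon_1)]=0$. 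Hence every off-diagonal term vanishes after conditioning on $\{x_i\}_{i=1}^n$. The diagonal terms equal $n^{-1}\mathbb{E}[|\psi(\epsilon_1)|^2]\int_X K_\lambda(x,x)\,v(dx)$. By Proposition~\ref{prop:2} and $v(X)=1$, this is at most $\mathbb{E}[|\psi(\epsilon_1)|^2]\,\mathcal{N}_\infty(\lambda)/n$, which is finite because $\mathbb{E}[|\psi(\epsilon_1)|^2]<\infty$.

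To conclude, Jensen's inequality gives
\begin{align*}
\mathbb{E}\left[\sup_{\|f\|_{\mathcal{H},\lambda}\leq D}|I_2(f)|\right] \leq D C_n^{1/2}\, O\big((\mathcal{N}_\infty(\lambda)/n)^{1/2}\big) \leq O(1) D C_n,
\end{align*}
because $\mathcal{N}_\infty(\lambda)/n \leq C_n$ by the definition of $C_n$. Markov's inequality then yields the $O_{\mathbb{P}}(1)DC_n$ bound.

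The only step requiring some care is the measurability of the supremum. It is handled by the exact duality identity above, which expresses the supremum as a measurable norm of a random element of the separable Hilbert space $\mathcal{H}$. Beyond that, I expect no real obstacle; the argument is essentially the last part of the proof of Lemma~\ref{lem:I1md}.
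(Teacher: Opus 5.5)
Your proposal is correct and follows essentially the same route as the paper. You use the reproducing property and duality to reduce the supremum to $D C_n^{1/2}\bigl\|\frac{1}{n}\sum_{i=1}^n \psi(\epsilon_i)K_\lambda(x_i,\cdot)\bigr\|_{\mathcal{H},\lambda}$, kill the off-diagonal terms of its second moment via \ref{A4}, bound the diagonal by $\mathbb{E}[|\psi(\epsilon_1)|^2]\,\mathcal{N}_\infty(\lambda)/n = O(C_n)$ using Proposition~\ref{prop:2}, and conclude with Markov's inequality; your extra Jensen step and the measurability remark are harmless refinements of that argument.
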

\begin{proof}

Using the reproducing property and the definition of the dual norm, we find
\begin{align*}
\sup_{\|f\|_{\mathcal{H},\lambda} \leq D} \left| \frac{1}{n} \sum_{i=1}^n \psi \left(\epsilon_i \right) f(x_i) \right| = D \left\| \frac{1}{n} \sum_{i=1}^n \psi\left( \epsilon_i  \right)	K_{\lambda}(x_i,\cdot) \right\|_{\mathcal{H},\lambda}.
\end{align*}
Squaring and taking expectations recalling that, by assumption \ref{A4}, the $\{\epsilon_i\}_{i=1}^n$ are i.i.d., independent of the $\{x_i\}_{i=1}^n$, $\mathbb{E}[\psi(\epsilon_1)]=0$ and $\mathbb{E}[|\psi(\epsilon_1)|^2]<\infty$, we have
\begin{align*}
\mathbb{E} \left[ \left\| \frac{1}{n} \sum_{i=1}^n \psi\left( \epsilon_i  \right)	K_{\lambda}(x_i,\cdot) \right\|_{\mathcal{H},\lambda}^2 \right] & = \frac{1}{n^2} \sum_{i=1}^n \sum_{j=1}^n \mathbb{E}[K_{\lambda}(x_i,x_j)] \mathbb{E}[ \psi(\epsilon_i)  \psi(\epsilon_j)]   = \frac{\mathbb{E}[ |\psi(\epsilon_1)|^2]}{n}  \mathbb{E}\left[ K_{\lambda}(x_1,x_1) \right] \\& \leq c_0 \frac{\sup_{x \in X} K_{\lambda}(x,x)}{n} = c_0 \frac{\mathcal{N}_{\infty}(\lambda)}{n} =O(C_n),
\end{align*}
by Proposition~\ref{prop:2}. The result follows by Markov's inequality.
\end{proof}

\begin{lemma}
\label{lem:I3}
Under the conditions of Theorem~\ref{thm:1},
\begin{align*}
\sup_{\|f\|_{\mathcal{H},\lambda} \leq D} |I_3(f)| & = O(1) D C_n,
\end{align*}
i.e., \eqref{eq:I3} holds.
\end{lemma}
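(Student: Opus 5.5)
We need to bound $\sup_{\|f\|_{\mathcal{H},\lambda}\le D} 2\lambda C_n^{1/2}|\langle f,f_\lambda\rangle_{\mathcal{H}}|$ deterministically by $O(1)DC_n$. The plan is to use the Cauchy--Schwarz inequality in $\mathcal{H}$, splitting the factor $\lambda$ as $\lambda^{1/2}\cdot\lambda^{1/2}$:
\begin{align*}
\lambda|\langle f,f_\lambda\rangle_{\mathcal{H}}| \le \lambda^{1/2}\|f\|_{\mathcal{H}}\cdot \lambda^{1/2}\|f_\lambda\|_{\mathcal{H}}.
\end{align*}
The first factor is controlled by the definition of the penalty-weighted norm \eqref{eq:ip}. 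Indeed, $\lambda\|f\|_{\mathcal{H}}^2\le \|f\|_{\mathcal{H},\lambda}^2$, so $\lambda^{1/2}\|f\|_{\mathcal{H}}\le D$ uniformly over the ball $\{\|f\|_{\mathcal{H},\lambda}\le D\}$.

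For the second factor, invoke the controlled explosion property in Lemma~\ref{lem:intprop}, which applies for $\lambda\in(0,1]$ and hence for all large $n$ since $\lambda\to 0$. It gives $\|f_\lambda\|_{\mathcal{H}}^2\le c_0\lambda^{-(1-\beta)}$, so
\begin{align*}
\lambda^{1/2}\|f_\lambda\|_{\mathcal{H}}\le c_0^{1/2}\lambda^{1/2}\lambda^{-(1-\beta)/2}=c_0^{1/2}\lambda^{\beta/2}.
\end{align*}

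Combining the two bounds yields
\begin{align*}
\sup_{\|f\|_{\mathcal{H},\lambda}\le D}|I_3(f)|\le 2c_0^{1/2} D\, C_n^{1/2}\lambda^{\beta/2}.
\end{align*}
Since $C_n=\mathcal{N}_\infty(\lambda)/n+\lambda^\beta\ge \lambda^\beta$, we have $\lambda^{\beta/2}\le C_n^{1/2}$, and the right side is at most $2c_0^{1/2}DC_n$. This is $O(1)DC_n$, with a non-random constant, as claimed.

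There is no real obstacle here. The only subtle point is that one must split the $\lambda$ symmetrically so as to exploit the $\mathcal{H},\lambda$-norm and the interpolant bound simultaneously. Bounding $|\langle f,f_\lambda\rangle_{\mathcal{H}}|$ by $\lambda^{-1/2}D\|f_\lambda\|_{\mathcal{H}}$ and then multiplying by $\lambda$ gives exactly this. When $\beta=1$ the bound reduces to $\|f_\lambda\|_{\mathcal{H}}=O(1)$, so \ref{A6} is not needed, since only the first item of Lemma~\ref{lem:intprop} is used.
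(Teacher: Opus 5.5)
Your proof is correct and follows the paper's own argument: Cauchy--Schwarz in $\mathcal{H}$ with $\lambda$ split as $\lambda^{1/2}\cdot\lambda^{1/2}$, the bound $\lambda^{1/2}\|f\|_{\mathcal{H}}\le D$, the controlled explosion bound $\|f_\lambda\|_{\mathcal{H}}^2\le c_0\lambda^{-(1-\beta)}$, and finally $\lambda^{\beta/2}\le C_n^{1/2}$. You are also slightly more careful than the paper in noting that the lemma applies only once $\lambda\le 1$, i.e.\ for all large $n$.
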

\begin{proof}
By definition of the norm $\|\cdot\|_{\mathcal{H},\lambda}$, $\lambda^{1/2} \|f\|_{\mathcal{H}} \leq \|f\|_{\mathcal{H},\lambda}$, hence, by the Schwarz inequality in $\mathcal{H}$,
\begin{align*}
\sup_{ \|f\|_{\mathcal{H},\lambda} \leq D} |I_3(f)| & \leq 2 \lambda^{1/2} \|f_{\lambda}\|_{\mathcal{H}} C_n^{1/2} \sup_{ \|f\|_{\mathcal{H},\lambda} \leq D} \lambda^{1/2} \|f\|_{\mathcal{H}} \leq 2 \lambda^{1/2} \|f_\lambda\|_{\mathcal{H}} C_n^{1/2} D.
\end{align*}
By Lemma~\ref{lem:intprop}, $\|f_{\lambda}\|_{\mathcal{H}} =O(\lambda^{-(1-\beta)/2})$, so
\begin{align*}
\sup_{ \|f\|_{\mathcal{H},\lambda} \leq D} |I_3(f)| = D O( \lambda^{1/2} C_n^{1/2} \lambda^{-(1-\beta)/2}) = D O( \lambda^{\beta/2} C_n^{1/2}) = D C_n O(1),
\end{align*}
given that $\lambda^{\beta} \leq C_n$; the proof is complete.
\end{proof}

\begin{proof}[Proof of Theorem~\ref{thm:2}]

Most of the proof of Theorem~\ref{thm:1} goes through after an estimate for $\mathcal{N}_{\infty}(\lambda)$ as $\lambda \to 0$ is obtained. For this, we recall that there exists a finite $M>0$ such that $\sup_{j} \|\phi_j\|_{\infty} \leq M$  and, by properties of the univariate Sobolev space $\mathcal{H}^m([0,1])$, $\gamma_j \asymp j^{2m}$ for all large $j$ \citep[see, e.g.,][Theorem 2.8.3]{Hsing:2015}. Consequently, using \eqref{eq:tRK} and Proposition~\ref{prop:2} and taking advantage of the product structure, we find
\begin{align*}
\mathcal{N}_{\infty}(\lambda) \leq M^{2d} \sum_{i_1, \ldots, i_d} \frac{1}{1+ \prod_{j=1}^d \gamma_{i_j}} \leq dM^{2d} + M^{2d} \int_{1}^{\infty} \ldots \int_{1}^{\infty} \frac{d x_1 \ldots d x_d}{1+\lambda x_1^{2m} \ldots x_d^{2m}},
\end{align*}
with the second inequality following from integral approximation on each coordinate. Change variables according to $z_1 = x_1, z_2 = x_1 x_2, \ldots, z_d = \prod_{j=1}^d x_j$ mapping $(1,\infty)^d$ bijectively to $\{1< z_1 < z_2 < \ldots <z_d < \infty\}$. The absolute Jacobian of this transformation is $\prod_{j=1}^{d-1} z_j^{-1}$, so 
\begin{align*}
\int_{1}^{\infty} \ldots \int_{1}^{\infty} \frac{d x_1 \ldots d x_d}{1+\lambda x_1^{2m} \ldots x_d^{2m}} & = \int_{1}^{\infty} \ldots \int_{1}^{z_2}  \frac{dz_1 \ldots dz_d}{z_1 \ldots z_{d-1}(1 + \lambda z_d^{2m})} = \frac{1}{(d-1)!} \int_{1}^{\infty} \frac{[\log(z_d)]^{d-1}}{1+\lambda z_d^{2m}} dz_d
\\ & \leq \frac{\lambda^{-1/(2m)}}{(d-1)!} \int_{0}^{\infty} \frac{[(2m)^{-1}\log\left( \frac{1}{\lambda}  \right) + \log(w)]^{d-1}}{1+w^{2m}} dw 
\\& = O \left( \frac{1}{\lambda^{\frac{1}{2m}}} \left[\log\left(\frac{1}{\lambda} \right) \right]^{d-1} \right),
\end{align*}
where the second equality is obtained from $\int_{1}^{z_d} \ldots \int_{1}^{z_2} \prod_{i=1}^{d-1} z_i^{-1} dz_1 \ldots dz_{d-1} = [\log(z_d)]^{d-1}/(d-1)!$, which may be proved by induction on $d \geq 1$. The inequality results from changing variables according to $w = \lambda^{1/(2m)} z_d$ and extending the lower limit of integration to $0$. To pass to the last equality we have used $\lambda \to 0$ and the observation that $\int_{0}^{\infty} |[\log(w)]^{d-1}/(1+w^{2m})| dw < \infty $ for all $d, m \geq 1$. Thus $\mathcal{N}_{\infty}(\lambda) = O(\lambda^{-1/(2m)} [\log(1/\lambda) ]^{d-1})$, as $\lambda \to 0$. 

Lemma~\ref{lem:intprop} remains valid in the present setting, as do Lemmas~\ref{lem:I1md}--\ref{lem:I3}. Moreover, since, by assumption, $v$ has Lebesgue density $h$ bounded away from zero and infinity on $[0,1]^d$, repeating the steps in the proof of Lemma~\ref{lem:I1}, it may be verified that there exist positive constants $c_1$ and $c_2$ such that 
\begin{align*}
\mathbb{E}[I_1(f)] \geq c_1\frac{\gamma}{4} C_n \|f\|_{\mathcal{L}^2([0,1]^d)}^2 - c_2\frac{3\gamma}{2} C_n \|f\|_{\mathcal{L}^2([0,1]^d)} + \lambda C_n \|f\|_{\mathcal{H}^m([0,1])^{\otimes d}}^2,
\end{align*}
for all large $n$. This implies that the infimum over the $D$-sphere in $\|f\|_{\mathcal{H}^m([0,1])^{\otimes d}}^2$ is lower bounded by the product of $C_n$ and a quadratic function of $D$ with a strictly positive coefficient on $D^2$. Thus,
\begin{align*}
\lim_{D \to \infty} \liminf_{n \to \infty} \Pr\left(\inf_{\| f\|_{\mathcal{H}^m([0,1])^{\otimes d}, \lambda}=D} L_n(f_{\lambda}+C_n^{1/2}f)> L_n(f_\lambda) \right) = 1,
\end{align*}
completing the proof.

\end{proof}

\end{document}